\numberwithin{equation}{section}  
\begin{document}



\newcommand{\zxz}[4]{\begin{pmatrix} #1 & #2 \\ #3 & #4 \end{pmatrix}}
\newcommand{\abcd}{\zxz{a}{b}{c}{d}}
\newcommand{\kzxz}[4]{\left(\begin{smallmatrix} #1 & #2 \\ #3 &
#4\end{smallmatrix}\right) }
\newcommand{\kabcd}{\kzxz{a}{b}{c}{d}}




\newcommand{\A}{{\mathbb A}}
\newcommand{\C}{{\mathbb C}}
\newcommand{\F}{{\mathbb F}}
\newcommand{\G}{{\mathbb G}}
\newcommand{\R}{{\mathbb R}}
\newcommand{\Q}{{\mathbb Q}}
\newcommand{\X}{{\mathbb X}}
\newcommand{\Z}{{\mathbb Z}}
\newcommand{\HZ}{\widehat{\Z}}


\newcommand{\rom}[1]{\text{\rm #1}}
\renewcommand{\roman}{\rm}

\newcommand{\Aut}{\text{\rm Aut}}
\newcommand{\CH}{\widehat{\text{\rm CH}}}
\newcommand{\cha}{{\text{\rm char}}}
\newcommand{\CHe}{\text{\rm CHeeg}}
\newcommand{\degh}{\widehat{\text{\rm deg}}}
\newcommand{\degH}{\widehat{\text{\rm deg}}}    
\newcommand{\diag}{{\text{\rm diag}}}
\newcommand{\Diff}{\text{\rm Diff}}
\newcommand{\disc}{\text{\rm discr}}
\renewcommand{\div}{\text{\rm div}}
\newcommand{\divh}{\widehat{\text{\rm div}}}
\newcommand{\DS}{\text{\rm DS}}
\newcommand{\Ei}{\text{\rm Ei}}
\newcommand{\End}{\text{\rm End}}
\newcommand{\ev}{{\text{\rm ev}}}
\newcommand{\Gal}{\text{\rm Gal}}
\newcommand{\GL}{\text{\rm GL}}
\newcommand{\GSpin}{\text{\rm GSpin}}
\newcommand{\Hom}{\text{\rm Hom}}
\newcommand{\hor}{{\text{\rm horiz}}}
\newcommand{\id}{\text{\rm id}}
\newcommand{\im}{\text{\rm im}}
\renewcommand{\Im}{\text{\rm Im}}
\newcommand{\inv}{{\text{\rm inv}}}
\newcommand{\Jac}{\text{\rm Jac}}
\newcommand{\Leray}{{\mathrm L}}
\newcommand{\Lie}{\text{\rm Lie}}
\newcommand{\Mp}{\text{\rm Mp}}
\newcommand{\mult}{\text{\rm mult}}
\newcommand{\MW}{\text{\rm MW}}
\newcommand{\MWt}{\widetilde{\MW}}
\newcommand{\new}{\text{\rm new}}
\newcommand{\Nm}{\text{\rm Nm}}
\newcommand{\ord}{\text{\rm ord}}
\newcommand{\PGL}{\text{\rm PGL}}
\newcommand{\Pic}{\text{\rm Pic}}
\newcommand{\Pich}{\widehat{\text{\rm Pic}}}
\newcommand{\pr}{\text{\rm pr}}
\newcommand{\ra}{\text{\rm ra}}
\newcommand{\Rao}{\mathrm R}
\renewcommand{\Re}{\text{\rm Re}}
\newcommand{\sgn}{\text{\rm sgn}}
\newcommand{\sig}{\text{\rm sig}}
\newcommand{\SL}{\text{\rm SL}}
\newcommand{\SO}{\text{\rm SO}}
\newcommand{\Sp}{\text{\rm Sp}}
\newcommand{\Spec}{\text{\rm Spec}\, }
\newcommand{\Spf}{\text{\rm Spf}}
\newcommand{\supp}{\text{\rm supp}}
\newcommand{\Sym}{{\text{\rm Sym}}}
\newcommand{\tr}{\text{\rm tr}}
\newcommand{\type}{\text{\rm type}}
\newcommand{\Ver}{\text{\rm Vert}}
\newcommand{\vol}{\text{\rm vol}}
\newcommand{\Wald}{\text{\rm Wald}}


\newcommand{\Cal}{\mathcal}     

\newcommand{\AHH}{\hat{\Cal A}}   
\newcommand{\CHH}{\hat{\Cal C}}
\newcommand{\MM}{\Cal D}          
\newcommand{\MMb}{\MM^\bullet}
\newcommand{\ssplit}{\text{\bf split}}
\newcommand{\whcc}{\widehat{\Cal C}}
\newcommand{\CO}{\mathcal O}
\newcommand{\COH}{\widehat{\CO}}
\newcommand{\M}{\Cal M}
\newcommand{\OB}{\Cal O_B}
\newcommand{\XX}{\mathcal X}
\newcommand{\bXX}{\bar\XX}
\newcommand{\wc}{\hat{\Cal C}}
\newcommand{\wch}{\wc^{\text{\rm hor}}}
\newcommand{\ZZ}{\Cal Z}
\newcommand{\ZH}{\widehat{\Cal Z}}   
\newcommand{\Zh}{\widehat{\Cal Z}}
\newcommand{\ZZh}{\ZZ^{\text{\rm hor}}}
\newcommand{\ZZv}{\ZZ^{\text{\rm ver}}}
\newcommand{\ZZhh}{\Zh^{\text{\rm hor}}}
\newcommand{\ZZhv}{\Zh^{\text{\rm ver}}}


\newcommand{\nass}{\noalign{\smallskip}}
\newcommand{\snass}{\noalign{\vskip 2pt}}
\newcommand{\tent}[1]{ \vphantom{\vbox to #1pt{}} }   


\newcommand{\scr}{\scriptstyle}
\newcommand{\disp}{\displaystyle}

\font\cute=cmitt10 at 12pt
\font\smallcute=cmitt10 at 9pt
\newcommand{\kay}{{\text{\cute k}}}
\newcommand{\smallkay}{{\text{\smallcute k}}}

\renewcommand{\a}{\alpha}
\renewcommand{\b}{\beta}
\newcommand{\e}{\epsilon}
\renewcommand{\l}{\lambda}
\renewcommand{\L}{\Lambda}
\renewcommand{\o}{\omega}
\renewcommand{\O}{\Omega}
\renewcommand{\P}{\Phi}
\newcommand{\ph}{\varphi}
\newcommand{\phih}{\widehat{\phi}}
\newcommand{\wphi}{\widehat{\phi}}
\newcommand{\phit}{\widetilde{\phi}}
\newcommand{\s}{\sigma}
\newcommand{\vth}{\vartheta}


%

\newcommand{\Pt}{P}
\newcommand{\Ph}{\P}
\newcommand{\Pht}{\tilde \P}   
\newcommand{\Kt}{K}           
\newcommand{\Mt}{M}

\newcommand{\pht}{\widetilde{\phi}}
\newcommand{\It}{I}
\newcommand{\Jt}{\widetilde{J}}
\newcommand{\lt}{\widetilde{\l}}
\newcommand{\vp}{\varpi}

\newcommand{\bom}{{\boldsymbol{\o}}}
\newcommand{\hbom}{\widehat{\bom}}
\newcommand{\ff}{{\bold f}}
\newcommand{\fsp}{\boldsymbol{f}_{\!\rm sp}}
\newcommand{\fev}{\boldsymbol{f}_{\!\rm ev}}
\newcommand{\fb}{\boldsymbol{f}}
\newcommand{\J}{\und{J}'}
\newcommand{\JJ}{\bold J'}
\newcommand{\V}{\bold V}
\newcommand{\xx}{\bold x}

\newcommand{\g}{{\mathfrak g}}
\renewcommand{\H}{\mathfrak H}


\newcommand{\back}{\backslash}
\newcommand{\CT}[1]{\operatornamewithlimits{CT}_{#1}}
\renewcommand{\d}{\partial}
\newcommand{\db}{\bar\partial}
\newcommand{\dbar}{\bar{\partial}}
\newcommand{\gs}[2]{\langle \,#1,#2\,\rangle}
\newcommand{\Gt}{G}
\newcommand{\hfal}{h_{\text{\rm Fal}}}
\newcommand{\II}{\int^\bullet}
\newcommand{\isoarrow}{\ {\overset{\sim}{\longrightarrow}}\ }
\newcommand{\lisoarrow}{\ {\overset{\sim}{\longleftarrow}}\ }
\newcommand{\limdir}[1]{\underset{\underset{#1}{\rightarrow}}{\lim}}
\newcommand{\lan}{\operatorname{\langle}\hskip .5pt}
\newcommand{\ran}{\,\operatorname{\rangle}}
\newcommand{\lra}{\longrightarrow}
\newcommand{\doublelra}{\ {\overset{\scr\lra}{\scr\lra}}\ }
\newcommand{\nat}{\natural}
\newcommand{\notmid}{\mkern-5mu\not\mkern5mu\mid}
\newcommand{\Optoc}{\text{\rm Opt}(O_{c^2d},O_B)}
\newcommand{\psim}{\psi^{-}}
\newcommand{\qeq}{\ \overset{??}{=}\ }
\newcommand{\sh}{\sharp}
\newcommand{\thCH}{\theta^{\text{\rm ar}}}
\newcommand{\wht}{\widehat{\theta}}     
\newcommand{\triv}{1\!\!1}
\renewcommand{\tt}{\otimes}
\newcommand{\und}[1]{\underline{#1}}
\newcommand{\z}{z}  

\newcommand{\thMW}{\theta^{\text{\rm ar}}}
\newcommand{\tph}{\widetilde{\widehat\phi_1}}
\newcommand{\Pet}{\text{\rm Pet}}





\newcommand{\thing}{ \raisebox{-6.4pt}{$\tilde{\tilde{}}$}  }   
\newcommand{\smallthing}{ \raisebox{-4.4pt}{$\scr\tilde{\tilde{}}$}  }
\newcommand{\ttilde}[1]{\overset{\smash{\thing}}{#1}}
\newcommand{\smallttilde}[1]{\overset{\smash{\smallthing}}{#1}}
\newcommand{\downhookarrow}{\hbox{$\downarrow\hskip -6.1pt\raisebox{6pt}{$\cap$}$}}


\providecommand{\bysame}{\makebox[3em]{\hrulefill}\thinspace}   
\newcommand{\hfb}{\hfill\break}
\newcommand{\margincom}[1]{\marginpar{\bf\raggedright #1}}
\newcommand{\Sec}{\S}


\numberwithin{equation}{section}
\setcounter{section}{0}
\setcounter{MaxMatrixCols}{15}


\newtheorem{theo}{Theorem}[section]
\newtheorem{lem}[theo]{Lemma}
\newtheorem{prop}[theo]{Proposition}
\newtheorem{cor}[theo]{Corollary}
\newtheorem{conj}[theo]{Conjecture}
\newtheorem{rem}[theo]{Remark}      
\newtheorem{defn}[theo]{Definition}

\newcommand{\EE}{{\mathbb E}}

\newcommand{\OO}{\text{\rm O}}
\newcommand{\UU}{\text{\rm U}}

\newcommand{\OK}{O_{\smallkay}}
\newcommand{\DI}{\mathcal D^{-1}}

\newcommand{\pre}{\text{\rm pre}}

\newcommand{\Bor}{\text{\rm Bor}}
\newcommand{\Rel}{\text{\rm Rel}}
\newcommand{\rel}{\text{\rm rel}}
\newcommand{\Res}{\text{\rm Res}}
\newcommand{\TG}{\widetilde{G}}

\newcommand{\OL}{O_{\Lambda}}
\newcommand{\OLB}{O_{\Lambda,B}}

\newcommand{\p}{\varpi}

\newcommand{\cutter}{\vskip .1in\hrule\vskip .1in}

\parindent=0pt
\parskip=10pt
\baselineskip=14pt

\newcommand{\PP}{\mathcal P}
\renewcommand{\OO}{\mathcal O}
\newcommand{\BB}{\mathbb B}
\newcommand{\OBB}{O_{\BB}}
\newcommand{\Max}{\text{\rm Max}}
\newcommand{\Opt}{\text{\rm Opt}}
\newcommand{\OH}{O_H}

\newcommand{\phhat}{\widehat{\phi}}
\newcommand{\thetahat}{\widehat{\theta}}

\newcommand{\lbold}{\text{\boldmath$\l$\unboldmath}}
\newcommand{\abold}{\text{\boldmath$a$\unboldmath}}
\newcommand{\cbold}{\text{\boldmath$c$\unboldmath}}
\newcommand{\aabold}{\text{\boldmath$\a$\unboldmath}}
\newcommand{\gbold}{\text{\boldmath$g$\unboldmath}}
\newcommand{\obold}{\text{\boldmath$\o$\unboldmath}}
\newcommand{\fbold}{\text{\boldmath$f$\unboldmath}}
\newcommand{\rbold}{\text{\boldmath$r$\unboldmath}}
\newcommand{\ffbold}{\und{\fbold}}

\newcommand{\boldgamma}{\text{\boldmath$\gamma$\unboldmath}}

\newcommand{\deltaBB}{\delta_{\BB}}
\newcommand{\kappaBB}{\kappa_{\BB}}
\newcommand{\aboldBB}{\abold_{\BB}}
\newcommand{\lboldBB}{\lbold_{\BB}}
\newcommand{\gboldBB}{\gbold_{\BB}}
\newcommand{\bbold}{\text{\boldmath$\b$\unboldmath}}

\newcommand{\fff}{\phi}

\newcommand{\spp}{\text{\rm sp}}

\newcommand{\pob}{\mathfrak p_{\bold o}}
\newcommand{\kob}{\mathfrak k_{\bold o}}
\newcommand{\gob}{\mathfrak g_{\bold o}}
\newcommand{\pobp}{\mathfrak p_{\bold o +}}
\newcommand{\pobm}{\mathfrak p_{\bold o -}}


\newcommand{\bb}{\frak b}

\newcommand{\bbbold}{\text{\boldmath$b$\unboldmath}}

\renewcommand{\ll}{\,\frak l}
\newcommand{\uC}{\underline{\Cal C}}
\newcommand{\uZZ}{\underline{\ZZ}}
\newcommand{\B}{\mathbb B}
\newcommand{\CL}{\text{\rm Cl}}

\newcommand{\pp}{\frak p}

\newcommand{\OKp}{O_{\smallkay,p}}

\renewcommand{\top}{\text{\rm top}}

\newcommand{\bF}{\bar{\mathbb F}_p}

\newcommand{\beq}{\begin{equation}}
\newcommand{\eeq}{\end{equation}}


\newcommand{\Dl}{\Delta(\l)}
\newcommand{\mm}{{\bold m}}

\newcommand{\FD}{\text{\rm FD}}
\newcommand{\LDS}{\text{\rm LDS}}

\newcommand{\dcM}{\dot{\Cal M}}
\newcommand{\bpm}{\begin{pmatrix}}
\newcommand{\epm}{\end{pmatrix}}

\newcommand{\GW}{\text{\rm GW}}

\newcommand{\uk}{\bold k}
\newcommand{\uo}{\text{\boldmath$\o$\unboldmath}}

\newcommand{\uz}{\und{\zeta}}
\newcommand{\duz}{\und{\dot\zeta}}
\newcommand{\ub}{\und{b}}
\newcommand{\uB}{\und{B}}
\renewcommand{\uC}{\und{C}}
\newcommand{\xp}{x_+}
\newcommand{\xm}{x_-}
\newcommand{\tu}{\tilde u}
\newcommand{\sspan}{\text{\rm span}}

\newcommand{\kk}{\mathfrak k}
\newcommand{\tio}{\tilde\o}
\newcommand{\Fock}{\text{\rm Fock}}
\newcommand{\AO}{{\bf AO}}
\newcommand{\qq}{{\bold q}}
\newcommand{\Ps}{\Psi}

\newcommand{\dbs}[1]{\frac{\d \uB_{#1}}{\d s}}
\newcommand{\dbt}[1]{\frac{\d \uB_{#1}}{\d t}}

\newcommand{\OFD}{\text{\rm OFD}}
\newcommand{\Erf}{\text{\rm Erf}}
\newcommand{\Erfc}{\text{\rm Erfc}}
\newcommand{\Arctan}{\text{\rm Arctan}}

\newcommand{\CC}{\mathcal C}
\newcommand{\bC}{\bold C}

\newcommand{\simp}{\text{\rm Simp}}

\newcommand{\mrE}{\mathring{E}}

\newcommand{\zzeta}{\text{\boldmath$\zeta$\unboldmath}}

\newcommand{\vbold}{\text{\boldmath$v$\unboldmath}}
\newcommand{\sbold}{\text{\boldmath$\s$\unboldmath}}
\newcommand{\zbold}{\text{\boldmath$\z$\unboldmath}}
\newcommand{\Tbold}{\text{\boldmath$T$\unboldmath}}
\newcommand{\hbold}{\text{\boldmath$h$\unboldmath}}

\newcommand{\qbold}{\text{\boldmath$q$\unboldmath}}

\newcommand{\wbold}{\text{\boldmath$w$\unboldmath}}

\newcommand{\Reg}{\text{Reg}}

\newcommand{\orr}{\text{\rm or}}

\newcommand{\aaa}{\a}

\newcommand{\FF}{\mathcal F}
\newcommand{\Cdod}{\mathcal C^{\text{\rm dod}}}
\newcommand{\Ddod}{\mathcal D^{\text{\rm dod}}}
\newcommand{\RR}{\mathcal R}

\newcommand{\nubold}{\text{\boldmath$\nu$\unboldmath}}

\newcommand{\wnat}{\wbold}

\newcommand{\newmod}{\, \text{\rm mod}\, }

\newcommand{\hh}{\mathbb H}

\newcommand{\cross}{\text{\boldmath$\times$\unboldmath}}

\newcommand{\now}{\count0=\time 
\divide\count0 by 60
\count1=\count0
\multiply\count1 by 60
\count2= \time
\advance\count2 by -\count1
\the\count0:\the\count2}


\title{The case of an $N$-gon}

\author{Jens Funke  and  Stephen Kudla}

\dedicatory{To Don Zagier, with our admiration}

\maketitle

\section{Introduction}

In this note we provide a construction of the indefinite theta series attached to $N$-gons in the symmetric space 
of an indefinite inner product space of signature $(m-2,2)$ following the suggestions of section C in the recent paper of 
Alexandrov, Banerjee, Manschot, and Pioline, \cite{ABMP-II}. 
We prove the termwise absolute convergence of the holomorphic mock modular part
of these series and also obtain an interpretation of the coefficients of this part as linking numbers. Thus we prove the convergence conjecture of 
\cite{ABMP-II} provided none of the vectors in the collection $\CC=\{C_1,\dots, C_N\}$ is a null vector. It should  be noted that the use of linking numbers 
and a homotopy argument 
eliminates the need for an explicit parametrization of a surface $S$ spanning the $N$-gon that was used in an essential way in our previous work \cite{FK-I}.
We indicate 
how our method could be carried over to a more general situation for signature $(m-q,q)$ where higher homotopy groups are now involved. 
In the last section, we apply the method to the case of a dodecahedral cell in the symmetric space of a quadratic form of signature $(m-3,3)$.

\subsection{Indefinite theta series}
The construction of theta series for lattices with indefinite quadratic forms continues to be a topic of considerable interest. 
If $(L,Q)$ is a lattice of rank $m$ with integer valued positive definite quadratic form, the classical theta series 
\beq\label{naive}
\theta_\mu(\tau) = \sum_{x\in \mu+L} \qq^{Q(x)}, \qquad \qq = e(\tau) = e^{2\pi i \tau},
\eeq
is termwise absolutely convergent and defines a holomorphic modular form of weight $\frac12 m$, by Poisson summation. Here $\tau=u+iv\in \H$, the upper half plane, and $\mu\in L^\vee$, the dual lattice of $L$.
For a quadratic lattice  $(L,Q)$ of signature $(p,q)$ with $pq>0$, the series (\ref{naive}) no longer converges in general due to (i) the presence of lattice vectors $x$ 
with $Q(x)<0$ and (ii) the existence infinitely many lattice vectors $x$ with a given value $Q(x)=n>0$, due, for example, 
for the existence of an infinite group $\Gamma_L$ of isometries of $L$. 
To handle this case, the classic construction due to Siegel introduces an auxiliary variable 
$$z\in D(V) := \{\ z\in  \text{Gr}^o_q(V)\,\mid\, Q\vert_z<0\ \},$$
where $\text{Gr}^o_q(V)$ is the Grassmannian of oriented $q$-planes in $V = L\tt_\Z\R$. 
The variable $z$ determines a positive definite inner product 
$$(x,x)_z = (x,x) + 2 R(x,z), \qquad R(x,z) = -(\pr_z(x),\pr_z(x)),$$
which coincides with $(\ ,\ )$ on $z^\perp$ and with $-(\ ,\ )$ on $z$. Here $\pr_z$ denotes the orthogonal projection to $z$.
Siegel's theta series 
$$\theta_\mu(\tau,z) = \sum_{x\in \mu+L}  e^{-2\pi R(x,z) v}\,\qq^{Q(x)}$$
is a non-holomorphic modular form of weight $\frac12(p-q)$ in $\tau$ and a smooth function of $z\in D$.  
There are also `geometric' theta series introduced in \cite{KM.I} and \cite{KM.II}, which are valued in $A^q(D)$, the space of smooth $q$-forms on $D$, and  
are of weight $\frac12(p+q)$ in $\tau$. 
Both these series and Siegel's are invariant under the action of an arithmetic subgroup $\Gamma_L\, \subset \, O(V)$ on $D$. As a consequence one can use the Siegel-type theta series and the geometric theta series as integral kernel to lift (automorphic) forms for one group to the other. See section 2 for a more detailed review of these theta series.

Another approach to the construction of indefinite theta series\footnote{This was suggested by Zagier in conversations with the second author at Maryland in the early 1980's.} involves restricting the summation to lattice vectors lying in suitable cones on which the quadratic form takes positive values.  When they are termwise absolutely convergent,  such sums define holomorphic functions of $\tau$. The standard holomorphic Eisenstein series $E_k$ series can be viewed as such an indefinite theta series of signature $(1,1)$ but in general they are rarely modular due to the lack of a suitable Poisson summation formula. Another non-modular example for signature $(1,1)$ was constructed by Zagier in section 2.3 in \cite{HZ} in his celebrated joint work with Hirzebruch where the holomorphic generating series was completed by a non-holomorphic part to obtain a modular object. 

A more systematic understanding of the nature of such series emerged from the seminal work of Zwegers \cite{zwegers}; for an overview see \cite{Zag10}. In the case of a quadratic lattice of signature $(m-1,1)$, 
suppose that $C$ and $C'$ are negative vectors in $V =  L\tt_\Z\R$ lying in the same nap of the light cone, i.e.,
such that $(C,C')<0$.  Then Zwegers considered  the series
\beq\label{Zweg-signs}
\vartheta_\mu(\tau,\mathcal C) = \sum_{\substack{x\in \mu+L} } \big(\  \sgn((x,C))-\sgn((x,C'))\ \big)\, \qq^{Q(x)},
\eeq
and showed that it is termwise absolutely convergent and defines a mock modular form, i.e., 
it has a non-holomorphic modular completion of weight $\frac12m$
\begin{align*}
\hat{\vartheta}_\mu(\tau, \mathcal C)= \vartheta_\mu(\tau,\mathcal C)
-\sum_{x\in \mu+L} \big(\,E^c((x,\uC)\sqrt{2v}) - E^c((x,\uC')\sqrt{2v})\,\big)\, \qq^{Q(x)},
\end{align*}
involving the complementary error function
$E^c(u) = 2\,\sgn(u)\int_{|u|}^\infty e^{-\pi t^2}\,dt$.
%

Alexandrov, Banerjee, Manschot, and Pioline, \cite{ABMP}, established the analog of Zwegers' result for signature $(m-2,2)$. For a collection $\mathcal{C}=\{C_1,C_2,C_3,C_4\}$ of four negative vectors in $V$ satisfying certain conditions they showed that
\[
\hat{\vartheta}_\mu(\tau, \mathcal C)=  \sum_{x\in \mu+L} \sum_{j=1}^4 E_2(C_j,C_{j+1},x\sqrt{2})\,\qq^{Q(x)}
\]
 is the non-holomorphic modular completion of the holomorphic generating series
\beq\label{ABMP-signs}
 \vartheta_\mu(\tau,\mathcal C) = \sum_{\substack{x\in \mu+L}} \sum_{j=1}^4\sgn(x,C_j)\,\sgn(x,C_{j+1})\, \qq^{Q(x)}.
\eeq
Here 
\[
E_2(x;c,c') = \int_{z} e^{\pi (y-\pr_z(x),y-\pr_z(x))}\,\sgn(y,c)\,\sgn(y,c')\,dy,
\]
is the higher error function defined in \cite{ABMP} with $z$ the negative $2$-plane spanned by $c$ and $c'$ with projection $\pr_z(x)$ onto $z$ and an appropriately normalized measure $dy$. In a subsequent paper \cite{ABMP-II}, the authors associate similar generating series to a collection $\mathcal{C}=\{C_1,\dots,C_N\}$ of $N$ negative vectors and conjecture their modularity. Nazaroglu \cite{Naz16} discussed the generalization to arbitrary signature $(m-q,q)$, while Raum \cite{Rau16} considered more general positive polyhedral cones. All these results employ a modularity criterion for indefinite theta series obtained by Vign\'eras \cite{Vig77}. 

It is important to note that the sign functions occurring in the summations (\ref{Zweg-signs}) and (\ref{ABMP-signs}) actually imply that only terms with $Q(x)>0$ contribute. While this is easy to 
see in (\ref{ABMP-signs}), it is not at all evident
in  general.

In \cite{kudla-int,FK-I,FK-II} we developed a different approach based on the geometric theta series of Kudla-Millson \cite{KM.I,KM.II}. Recall that the Kudla-Millson theta series 
\[
\vartheta_\mu(\tau,\ph_{KM}) = \sum_{x\in L+\mu}\ph_{KM}(x,\tau)
\]
is a closed differential $q$-form on the symmetric space $D$ associated to $V$ which in $\tau$ transforms like a modular form of weight $m/2$. Then for certain collections $\mathcal{C}$ of negative vectors we associate a (compact) $q$-cell $S(\mathcal{C})$ in $D$ and consider the integral
\[
 \int_{S(\mathcal{C})} \vartheta_\mu(\tau,\ph_{KM}),
\]
which inherits the modularity from $\theta_\mu(\tau,\ph_{KM})$. On the other hand, under certain conditions, we can compute the theta integral explicitly as a combination of 
generalized error functions and give a geometric interpretation of the holomorphic generating series. In particular, we obtained new proofs for the results obtained in \cite{zwegers}, \cite{ABMP}, and \cite{Naz16} for `cubical' input data, but we also established a new result when $\CC$ gives rise to geodesic $q$-simplices.

\subsection{The case of the $N$-gon}  In the present paper, we consider another type of sign function 
introduced in \cite{ABMP-II}. 
Suppose that $\sig(V)= (m-2,2)$ and that $L$ is a lattice in $V$ such that 
with $L\subset L^\vee$. 
Let $\CC= \{C_1,\dots,C_N\}$ be a collection of $N$ negative vectors in $V$ such that the oriented spans $z_j=[C_j,C_{j+1}]$ are negative $2$-planes
all lying in the same connected component of $D$, say $D^+$. This amounts to the set of conditions
\begin{align}
(C_j,C_j)&<0,\label{N-gon-c1}\\
\nass
(C_j,C_j)(C_{j+1},C_{j+1}) - (C_j,C_{j+1})^2&>0,\label{N-gon-c2}\\
\nass
(C_j,C_j)(C_{j-1},C_{j+1}) - (C_j,C_{j-1})(C_j,C_{j+1}) &<0,\label{N-gon-c3}
\end{align}
on inner products that we will sometimes refer to as the {\bf $N$-gon conditions}. Here the first two conditions are equivalent to $z_j$ being a negative $2$-plane, while the third ensure that all $z_j$ are indeed in the same component of $D$. Note that the last of these conditions is equivalent to $(-C_{j-1\perp j},C_{j+1\perp j})<0$, where for example, adopting the notation of \cite{ABMP}, 
$$C_{j-1\perp j} = C_{j-1} - \frac{(C_{j-1},C_j)}{(C_j,C_j)}\,C_j$$
 is the projection of $C_{j-1}$ to $C_j^{\perp}$. In particular,  the projections $-C_{j-1\perp j}$ and $C_{j+1\perp j}$  lie in the 
same nap of the light cone in the space $V_j:=C_j^\perp$ of signature $(m-2,1)$.  Thus 
\beq\label{gamma-j}
\gamma_j(s) := [C_j, (s-1)C_{j-1}+s C_{j+1}],\qquad s\in [0,1],
\eeq
defines a geodesic curve in $D'_{C_j} \cap D^+$ joining $z_{j-1}=[C_{j-1},C_j]$ and $z_j=[C_j,C_{j+1}]$, 
and  
\beq\label{gamma-C}
\gamma(\CC)= \sum_j \gamma_j
\eeq
is a closed and piecewise smooth (not necessarily simple) curve in $D^+$ with the $z_j$'s as it `vertices'. This is the N-gon of the title. With this let $S=S(\mathcal{C})$ be any oriented $2$-cell with boundary $\gamma(\CC)$. 

For $x\in V$, define the quantities
\begin{align}
\varepsilon(x;\mathcal C) &:= \sum_{j=1}^N \sgn(x,C_j)\,\sgn(x,C_{j+1}) - \sgn(\vbold,C_j)\,\sgn(\vbold,C_{j+1}),\\
\noalign{\vskip -10pt and }
\wnat(\CC) &:= -\sum_{j=1}^N  \sgn(\vbold,C_j)\,\sgn(\vbold,C_{j+1}), \label{def-wC-nat}
\end{align}
where $\vbold$ is any negative vector in $V$.  Here $\sgn(0)=0$. We will see that $\wnat(\CC)$ is independent of the choice of $\vbold$. 
Note that $|\varepsilon(x;\mathcal C)|\le 2 N$. 

Our main result concerning the $N$-gon is the following.
\begin{theo} For $\mu\in L^\vee/L$ the series
$$\vartheta_\mu(\tau,\CC) = \sum_{x\in \mu+L} \varepsilon(x;\CC)\,\qbold^{Q(x)}$$
is termwise absolutely convergent. Its modular completion given by 
$$\hat\vartheta_\mu(\tau,\CC) = \sum_{x\in \mu+L} \bigg(\  \wnat(\CC)+ \sum_{j=1}^N E_2(C_j,C_{j+1},x\sqrt{2})\ \bigg)\, \qbold^{Q(x)}$$
has weight $\frac{m}2$. Further,
$$\hat\vartheta_\mu(\tau,\CC)  = 4 \int_{S}\theta(\tau,\ph_{KM}).$$
\end{theo} 

For $N=4$ we will see below one always has $\wnat(\CC)=0$, and one recovers the main result of \cite{ABMP} which was also considered in \cite{kudla-int}. For triangles ($N=3$) one has $\wnat(\CC)=1$, and this is a very special case of the results in \cite{FK-II}. In general, we show $-N+4 \leq \wnat(\CC) \leq N-2$ and $\wnat(\CC) \equiv -N \pmod{4}$. 
The invariant $\wnat(\CC)$ is somewhat curious, and we discuss some of its properties in section~\ref{section-more-more}. 
 
For general $N$, this establishes the convergence conjecture made in \cite{ABMP-II} while also clarifying the role of the constant $\wnat(\CC)$. Finally, our proof will show that, as before, in the generating series $\vartheta_\mu(\tau,\CC) $ of the signs $\varepsilon(x;\CC)$ only terms with $Q(x)>0$ contribute. 

As in our prior work, this result is obtained by computing the integral of the geometric theta kernel
over the oriented $2$-cell $S$ with boundary $\gamma(\CC)$. 
Since $S$ is compact, this integral can be computed termwise, so the key is to compute the local theta integral
$$I^0(x,\CC) = \int_S \ph_{KM}^0(x)$$
for $x\in V$ and the Schwartz $2$-form used to construct the theta kernel. Further note that since $\ph_{KM}$ is closed the integral does not depend on the choice of $S$. 

The result given in Theorem~\ref{theoA} involves some new ideas. In our previous work we needed to impose some additional conditions on $\CC$ which ensured that we could explicitly parameterized the cell $S$ In this paper, we introduce a very general homotopy argument which allows to compute the local theta integral. Furthermore, the argument also shows that Fourier coefficients $\varepsilon(x;\CC)$ of the holomorphic generating series $\vartheta_\mu(\tau,\CC)$ can be interpreted as the linking number between the curve $\gamma(\CC)$ and a divisor $D_x$ in $D$. 



The conjecture made in \cite{ABMP-II} allows for the $C$'s to be rationally null. While we do not consider this case in this paper, we give an example, Theorem~\ref{th:Zagier}, where we explain that previous work of the first author \cite{Funke-diss} realizes Zagier's Eisenstein series \cite{Zagier,HZ} as indefinite theta series associated to signature $(1,2)$! For an outline approach to the general null vector case, see Remark~\ref{null-vector-remark}. 

In section 7 we outline how the results of this paper can be extended to general signature $(m-q,q)$, and in section $8$ we carry this out explicitly for $q=3$ for a geodesic dodecahedron. The cases of a cube and a tetrahedron were discussed in \cite{FK-II}, while other solids can be covered by our methods as well.


\section{Some machinery}

In this section, we review some of the basic machinery involved in the construction of indefinite theta series. 

\subsection{Non-holomorphic theta series} 
Let $V$, $(\ ,\ )$ be an inner product space over $\R$ of signature $(p,q)$ with $pq>0$, and let
$$D=D(V) := \{\ z\in  \text{Gr}^o_q(V)\,\mid\, Q\vert_z<0\ \}$$
be the space of oriented negative $q$-planes in $V$. Note that $D$ has two components $D^{\pm}$. Here $Q(x) = \frac12(x,x)$.  For $z\in D$ and $x\in V$, the associated majorant is  
$$(x,x)_z = (x,x) + 2 R(x,z), \qquad R(x,z) = -(\pr_z(x),\pr_z(x)).$$
The Siegel Gaussian is
$$\ph_0(x)=\ph_0(x,z) = e^{-\pi (x,x)_z} = e^{-\pi (x,x)} \,\ph_0^0(x), \qquad \ph_0^0(x) = e^{-2\pi R(x,z)}.$$
There is also a Schwartz form, see \cite{KM.I},
$$\ph_{KM}(x) = e^{-\pi (x,x)} \,\ph_{KM}^0(x)$$
valued in $A^q(D)$, the space of smooth $q$-forms on $D$.  
These forms are equivariant with respect to the natural action of the 
orthogonal group $G=\text{\rm O}(V)$:
$$\ph_0\in [\,S(V)\tt A^0(D)\,]^G, \qquad \ph_{KM} \in [ \, S(V)\tt A^q(D)\,]^G.$$
Moreover $\ph_{KM}(x)$ is closed, $d\ph_{KM}(x)=0$. 
With respect to the Weil representation action of the metaplectic group $\widetilde{\SL_2(\R)}$ on the Schwartz space $S(V)$, 
the functions $\ph_0$ and $\ph_{KM}$ are eigenfunctions of weights $\frac12(p-q)$ and $\frac12(p+q)$ respectively 
for the subgroup $\widetilde{\SO(2)}$. 

If $V= L\tt\R$ for a quadratic lattice $L$ with dual lattice $L^\vee \supset L$, then the (non-holomorphic) theta series
for $\mu\in L^\vee$ and $\tau=u+iv\in \H$, 
$$\theta_\mu(\tau,\ph_0) = \sum_{x\in L+\mu} \qq^{Q(x)}\, \ph_0^0(v^{\frac12}x),\qquad \qq=e(\tau),$$
and 
$$\theta_\mu(\tau,\ph_{KM}) = \sum_{x\in L+\mu} \qq^{Q(x)}\, \ph_{KM}^0(v^{\frac12}x),$$
have modular transformations of weights $\frac12(p-q)$ and $\frac12(p+q)$ respectively. The second of these 
defines a closed $q$-form on $D$, invariant with respect to the discriminant group  
$$\Gamma_L = \{ \gamma\in \text{\rm O}(V) \mid \gamma L= L,\ \gamma\vert_{L^\vee/L} = \text{id}\ \}.$$ 
More details can be found in section 2 of \cite{FK-II}. 

\subsection{Subspaces of $D$} There are sub-symmetric spaces of $D$ attached non-null vectors in $V$ as follows. 

 If $x\in V$ is a nonzero vector, let
\beq\label{def-Dx}
D_x = \{\, z\in D\,\mid x\in z^\perp\ \} = \{ \, z\in D\mid R(x,z)=0\ \}.
\eeq
This space is only non-empty if $(x,x)>0$, and in that case can be identified with the 
the space $D(V_x)$ of oriented negative $q$-planes in $V_x = x^\perp$. Note that $\sig(V_x)= (p-1,q)$
so that $D_x$ has codimension $q$ in $D$. 

Similarly, for a non-zero vector $x$, let
\beq\label{def-Dx'}
D'_x = \{ \, z\in D\,\mid x\in z\,\} = \{\, z\in D\,\mid (x,x)_z = (x,x)\,\}.
\eeq
This space is non-empty if and only if $(x,x)<0$, in which case $\sig(V_x) = (p,q-1)$. 
Then there is an isomorphism
$$D(V_x) \isoarrow D'_x, \qquad \zeta \mapsto [x,\zeta],$$
where, for an oriented $q-1$-plane $\zeta$ in $V_x$, $[x,\zeta]$ is the oriented $q$-plane in $V$ spanned by $x$ and $\zeta$ with 
the orientation induced by prepending $x$ to a properly oriented basis for $\zeta$. Note that $D'_x$ has codimension $p$ in $D$. 

In the case of main interest to us where $\sig(V) = (m-2,2)$, the $D_x$'s for positive vectors $x$ will have complex codimension $1$ in the hermitian 
symmetric space $D$ and the cycles $D'_x$ for negative vectors $x$ will be hyperbolic spaces of real dimension $m-2$, fixed points of
anti-holomorphic involutions of $D$.

\section{The main result}

We now consider the construction proposed in \cite{ABMP-II}, and so suppose that $\sig(V)= (m-2,2)$ and that $L$ is a lattice in $V$ such that 
with $L\subset L^\vee$. 
Our version of the ABMP data is a collection 
$\CC= \{C_1,\dots,C_N\}$  of $N$ vectors in $V$ satisfying the $N$-gon conditions (\ref{N-gon-c1}), (\ref{N-gon-c2}), and (\ref{N-gon-c3})
and hence defining an $N$-gon $\gamma(\CC)$ in $D^+$. 
Let $S=S(\CC)$ be an oriented $2$-chain in $D^+$ with boundary $\gamma(\CC)$ and let 
$$I^0(x,\CC) = \int_S \ph_{KM}^0(x)$$
be the integral of the $2$-form $\ph_{KM}^0(x)$ over $S$.  Since $\ph^0_{KM}(x)$ is closed, this integral does not depend on the choice of $S$. 
We will refer to it as the local theta integral. Our main result is the following. 



\begin{theo}\label{theoA} (i) The local theta integral is given explicitly by 
$$4\,I^0(x;\CC) = \wnat(\CC)+ \sum_{j=1}^N E_2(C_j,C_{j+1},x\sqrt{2}),$$
where, for a pair of negative vectors $c_1$ and $c_2$,  $E_2(c_1,c_2;x)$ is the generalized error function defined in \cite{ABMP}
and $\wnat(\CC)$ is defined by (\ref{def-wC-nat}).\hfb
(ii)  Let
$$\P(x;\CC):=\lim_{r\rightarrow\infty} I^0(r x, \CC).$$
Then
$$4\,\P(x;\CC)= \wnat(\CC)  + \sum_{j=1}^N  \sgn(x,C_j)\,\sgn(x,C_{j+1}) \ = \ \varepsilon(x;\mathcal C).$$
(iii) For $x\in V$, recall that $D_x=\{\ z\in D\mid x\in z^{\perp}\ \}$.
Then 
$$\P(x;\CC) \ne 0 \ \implies\ D_x\cap S\ne \emptyset.$$
In particular, for $x\ne 0$, $\P(x;\CC)\ne 0$ implies that $(x,x)>0$, since $D_x$ is non-empty. \hfb
(iv) Suppose that $(x,x)>0$. 
The cycle $D_x\subset D$ has complex codimension $1$ and the space $D^+ - D_x\cap D^+$ is a punctured disk bundle over the contractible 
space $D_x$. In particular, 
\beq\label{pi1-iso}
\pi_1(D^+ - D_x\cap D^+) =\Z, 
\eeq
canonically. Suppose that $x$ is regular with respect to $\CC$, i.e., $(x,C_j)\ne 0$ for all $j$.
Then, 
\begin{itemize}
\item[(a)] \vskip -8pt $4\,\P(x;\CC)\in \Z$,
 \item[(b)] the closed loop $\gamma(\CC)$ lies in $D^+ - D_x\cap D^+$, and, 
\item[(c)] under the identification (\ref{pi1-iso}),  its homotopy class is given by 
$4\,\P(x;\CC)$.
\end{itemize}
(v) The constant $\wnat(\CC)$ is independent of  the choice of negative vector $\vbold$. Moreover, $|\wnat(\CC)|\le N-2$ and $\wnat(\CC) \equiv -N \mod 4$. 
\end{theo}

\begin{rem} (a) For $x$ regular with respect to $\CC$, we can view $4\,\P(x;\CC)$ as the linking number of the cycles $\gamma(\CC)$ and $D_x$. \hfb
(b) The fact that $\wnat(\CC)$ does not depend on the choice of the negative vector seems to be a deeper result. At least we do not know a direct 
geometric proof. \hfb
(c) Once the independence of the choice of negative vector is known, the other conditions on $\wnat(\CC)$  given in (v) are immediate. Indeed, 
for a negative vector $\vbold$ with $(\vbold, C_1)=0$, at least $2$ terms in (\ref{def-wC-nat}) vanish so that $|\wnat(\CC)|\le N-2$. 
On the other hand, we can always take $\vbold$ so that all the inner products $(\vbold, C_j)\ne 0$ for all $j$, and so $-\wnat(\CC)$ is the sum of successive products of a collection of $N$ non-zero signs 
and hence $-\wnat(\CC)\equiv N\mod 4$. \hfb 
(d) In section~\ref{section-more-more}, we will show by examples that all values of $\wnat(\CC)$ subject to the conditions in (v) occur for some choice of $\CC$, cf. (\ref{all-w}). 
\end{rem}

\begin{cor} For $\mu\in L^\vee/L$ the series
$$\vartheta_\mu(\tau,\CC) = \sum_{x\in \mu+L} \varepsilon(x;\CC)\,\qbold^{Q(x)}$$
is termwise absolutely convergent. Its modular completion is given by 
$$\hat\vartheta_\mu(\tau,\CC) = \sum_{x\in \mu+L} \bigg(\  \wnat(\CC)+ \sum_{j=1}^N E_2(C_j,C_{j+1},x\sqrt{2})\ \bigg)\, \qbold^{Q(x)}$$
and
$$\hat\vartheta_\mu(\tau,\CC)  = 4 \int_{S}\theta(\tau,\ph_{KM}).$$
The weight $\frac{m}2$ modular transformation behavior of $\hat\vartheta_\mu(\tau,\CC)$ follows immediately from that of the theta form.
\end{cor}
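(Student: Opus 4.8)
The plan is to derive the corollary directly from Theorem~\ref{theoA}, whose statements already contain all the substance; what remains is to assemble the pieces and justify the interchange of summation and integration.

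First I would expand the theta integral. By definition $\theta(\tau,\ph_{KM}) = \sum_{x\in\mu+L}\qbold^{Q(x)}\,\ph_{KM}^0(v^{\frac12}x)$, and after combining $\qbold^{Q(x)}$ with the Gaussian factor of $\ph_{KM}^0$ the $x$-th summand is bounded, uniformly for $z$ in the compact cell $S$, by a constant multiple of $e^{-\pi v (x,x)_z}$ with the majorant $(\,\cdot\,,\cdot\,)_z$ positive definite; hence the series converges absolutely and uniformly on $S$. This licenses termwise integration, giving
$$4\int_S\theta(\tau,\ph_{KM}) = 4\sum_{x\in\mu+L}\qbold^{Q(x)}\,I^0(v^{\frac12}x,\CC).$$
Applying Theorem~\ref{theoA}(i) to each term rewrites the right-hand side as $\sum_{x}\big(\wnat(\CC)+\sum_{j=1}^N E_2(C_j,C_{j+1},v^{\frac12}x\sqrt2)\big)\qbold^{Q(x)}$, which is the asserted expression for $\hat\vartheta_\mu(\tau,\CC)$ and simultaneously identifies it with $4\int_S\theta(\tau,\ph_{KM})$. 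The weight $\tfrac m2$ modularity is then inherited verbatim: $\theta(\tau,\ph_{KM})$ transforms with weight $\tfrac12(p+q)=\tfrac m2$ in $\tau$, and integration against the fixed, $\tau$-independent chain $S$ commutes with this transformation.

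Next I would address the holomorphic generating series and its convergence, which is the one genuinely analytic point. By Theorem~\ref{theoA}(ii) the coefficient of $\qbold^{Q(x)}$ in $\vartheta_\mu(\tau,\CC)$ equals $4\,\P(x;\CC)=\varepsilon(x;\CC)$, the large-scale limit of $4\,I^0(rx,\CC)$; subtracting it from the $x$-th coefficient of $\hat\vartheta_\mu$ leaves exactly $\sum_{j}\big(E_2(C_j,C_{j+1},v^{\frac12}x\sqrt2)-\sgn(x,C_j)\sgn(x,C_{j+1})\big)$, the error-function completion term, which exhibits $\vartheta_\mu$ as the holomorphic part and $\hat\vartheta_\mu$ as its modular completion. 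For termwise absolute convergence I would invoke Theorem~\ref{theoA}(iii): if $\varepsilon(x;\CC)=4\,\P(x;\CC)\ne0$ then $D_x\cap S\ne\emptyset$, i.e. $x\in z^\perp$ for some $z\in S$. Since each $z^\perp$ is positive definite and $S$ is compact, there is a $\delta>0$ with $(x,x)\ge\delta|x|^2$ for all $x\in z^\perp$ and all $z\in S$, so the constraint $Q(x)\le T$ confines such $x$ to a bounded region of $V$; hence only finitely many lattice points $x$ with $\varepsilon(x;\CC)\ne0$ satisfy $Q(x)\le T$, with count growing polynomially in $T$. Combined with $|\varepsilon(x;\CC)|\le 2N$, this yields
$$\sum_{x\in\mu+L}|\varepsilon(x;\CC)|\,|\qbold|^{Q(x)}\ \le\ 2N\sum_{x\,:\,\varepsilon(x;\CC)\ne0} e^{-2\pi v\,Q(x)}\ <\ \infty.$$

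The main obstacle is not located in the corollary itself: all the hard work — the explicit evaluation of $I^0(x,\CC)$, the large-scale limit, and the linking-number criterion $\P(x;\CC)\ne0\Rightarrow D_x\cap S\ne\emptyset$ — resides in Theorem~\ref{theoA}. The one point requiring care here is precisely the convergence of $\vartheta_\mu$: a priori an indefinite lattice carries infinitely many vectors of a given positive norm, so the weak support condition $Q(x)>0$ together with boundedness of $\varepsilon(x;\CC)$ does not by itself suffice, as emphasized in the introduction. The resolution, as above, is that Theorem~\ref{theoA}(iii) upgrades $Q(x)>0$ to the sharp geometric condition $D_x\cap S\ne\emptyset$, and it is the compactness of $S$ that forces the contributing lattice points to be finite in number below any norm bound.
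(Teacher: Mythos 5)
Your proposal is correct and follows essentially the same route as the paper: termwise integration of the theta form over the compact chain $S$ combined with Theorem~\ref{theoA}(i) gives the completion identity, and convergence rests on Theorem~\ref{theoA}(iii) together with the compactness of $S$, which forces $(x,x)=(x,x)_{z_0}$ to be bounded below by a positive definite form uniformly over $S$ whenever $\varepsilon(x;\CC)\ne0$. The only cosmetic difference is that the paper concludes by dominating the sum with the convergent theta series of that positive definite majorant $(\,\cdot\,,\cdot\,)_S$, whereas you conclude by counting lattice points in balls; these are interchangeable.
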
 
\begin{proof} The termwise absolute convergence follows from the argument in section~4.6 of \cite{kudla-int}. Recall that the key point is that there 
is a positive definite inner product $(\ ,\ )_S$ on $V$ such that $(x,x)_z\ge (x,x)_S$ for all $z\in S$ and all $x\in V$, cf. Lemma~4.11 of \cite{kudla-int}.  
On the other hand,  if $x\in V$ with $\P(x;\CC)\ne 0$, 
there exists a point $z_0\in S\cap D_x$, and at such a point $(x,x) = (x,x)_{z_0} \ge (x,x)_S$. Thus
$$\sum_{x\in \mu+L} |\P(x;\CC)|\,e^{-\pi v (x,x)} \le \frac12 N\sum_{x\in\mu+L} e^{-\pi v (x,x)_S}$$
converges, since $|\P(x;\CC)|$ is bounded by $\frac12 N$.  

The theta form
$$\theta_\mu(\tau,\ph_{KM}) = \sum_{x\in \mu+L} \ph_{KM}^0(v^{\frac12}x)\,q^{Q(x)}$$
is termwise integrable over the compact $2$-chain $S$. By (i) of Theorem~\ref{theoA}, this integral is the given expression
$\hat\vartheta_\mu(\tau,\CC)$.
\end{proof}

\begin{rem}\label{illegal-N-gons}
One can also consider a general collection $\CC= \{C_1,\dots,C_N\}$  of $N$ vectors in $V$ satisfying \eqref{N-gon-c1}) and \eqref{N-gon-c2} so that $z_j=[C_j,C_{j+1}]$ give rise to oriented negative two planes. By adjusting the signs of the $C_j$ one can always achieve that \eqref{N-gon-c3} holds for $j=1,\dots,N-1$. The case that \eqref{N-gon-c3} also holds for $j=N$ is discussed. If it doesn't, so $[-C_N,C_1]$ defines a point in the same component of $D$ as $z_1,\dots,z_{N-1}$ then our proof can be adjusted to give an analogous result. Namely, for such a collection define the invariant $\wnat(\CC)$ by 
\[
\wnat(\CC) :=  \sgn(\vbold,C_N)\,\sgn(\vbold,C_{1})-\sum_{j=1}^{N-1}  \sgn(\vbold,C_j)\,\sgn(\vbold,C_{j+1}).
\]
Then 
$$\hat\vartheta_\mu(\tau,\CC) := \sum_{x\in \mu+L} \bigg(\  \wnat(\CC) -  E_2(C_N,C_{1},x\sqrt{2}) +
\sum_{j=1}^{N-1} E_2(C_j,C_{j+1},x\sqrt{2})\ \bigg)\, \qbold^{Q(x)}$$
is modular of weight $m/2$ and the completion of the series obtained by replacing the higher error function $E_2$ by the corresponding signs. 
\end{rem}

\subsection{Comparison with \cite{ABMP-II}}We finish this section by explaining the relation to the setup of \cite{ABMP-II}.
As in section C of \cite{ABMP-II}, suppose that $N$ is even and consider a collection of vectors $\CC' =\{ C'_1,\dots,C'_N\}$
satisfying the conditions 
\begin{align}
(C'_j,C'_j)&<0\\
\nass
(C'_j,C'_j)(C'_{j+1},C'_{j+1}) - (C'_j,C'_{j+1})^2&>0\\
\nass
(C'_j,C'_j)(C'_{j-1},C'_{j+1}) - (C'_j,C'_{j-1})(C'_j,C'_{j+1}) &>0.
\end{align}
Here we have excluded the case of null vectors and have kept in mind the fact that our inner product is the negative of the 
one in \cite{ABMP-II}. Note that the left side of the third condition is $(C'_j,C'_j)(C'_{j-1\perp j},C'_{j+1\perp j})$, so that this condition 
is equivalent to (C.10) in \cite{ABMP-II}. The first two conditions are unchanged if the $C'_j$'s are changed by signs. If we let
\beq\label{ABMP-to-us}
\CC = \{C'_1,C'_2,-C'_3,-C'_4,C'_5, \dots, (-1)^{N/2-1}C'_N\}, 
\eeq
then the collection $\CC= \{C_1,\dots, C_N\}$ satisfies our conditions above, i.e., the sign in the third condition is reversed. 
Of course, the transformation (\ref{ABMP-to-us}) can be reversed to yield ABMP type collections from ours. 
Thus our results can be applied to prove the conjecture in 
section C of \cite{ABMP-II}, provided all of the vectors $C'_j$  are timelike, i.e., non-null. 
We have kernel
$$4\,\P(x;\CC) = w(\CC)+  \sum_{j=1}^N (-1)^{j-1} \sgn(x,C'_j)\,\sgn(x,C'_{j+1})$$
and 
$$w(\CC)=- \sum_{j=1}^N  (-1)^{j-1}\sgn(\vbold,C'_j)\,\sgn(\vbold,C'_{j+1})$$
for any negative vector $\vbold$. 

Note that our answer differs slightly from, and corrects, what was proposed in \cite{ABMP-II}.

The termwise absolute convergence goes over immediately to the case where the kernel is
$$\P(x,P;\CC) = P(x) \P(x;\CC)$$
where $P(x)$ is a function of at most polynomial growth. Also recall that it is shown in \cite{ABMP-II} that the case of 
odd $N$ follows from the case of $N$-even. It remains to remove the restriction that all the $C'_j$'s are non-null.

\section{Proofs}
We begin by considering what happens on the regular set
$$\text{Reg}(\CC) = V \setminus \bigsqcup_j V_j, \qquad  V_j = C_j^\perp,$$
and write $\gamma=\gamma(\CC)$. 
If $x$ is regular with respect to $\CC$, then $D_x \cap \gamma$ is empty, because every point of $\gamma$ has the 
form $[C_j, \star]$ and hence is not in $D_x$ since $(x,C_j)\ne 0$. 

Recall from Lemma~5.1 (ii) of \cite{FK-II}, that on the set $D-D_x$, we have
$$\ph^0_{KM}(x) = d\Psi_{KM}^0(x),$$
for an explicit $1$-form $\Psi_{KM}^0(x)$.
For $x$ regular with respect to $\CC$, so that $\gamma$ is contained in $D-D_x$,  let 
$$J^0(x;\CC):= \int_\gamma \Psi^0_{KM}(x).$$
Note that, from the formula for $\Psi_{KM}^0(x)$ on the set $D-D_x$, cf. (5.9) and (5.6) in section 5 of \cite{FK-II}, 
\beq\label{Psi-decay}
\lim_{r\rightarrow\infty}\Psi_{KM}^0(rx) = 0.
\eeq
and hence
\beq\label{lim-of-Psi}
\lim_{r\rightarrow\infty} J^0(r x, \CC) =0.
\eeq

For any $x$ (not necessarily regular), recall that we defined
\beq\label{coho-lim}
\P(x,\CC) = \lim_{r\rightarrow\infty} I^0(r x, \CC).
\eeq

\begin{lem}  Suppose that $D_x\cap S = \emptyset$. Then the limit (\ref{coho-lim}) exists and is equal to $0$. 
\end{lem}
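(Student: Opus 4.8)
The plan is to turn $I^0(rx,\CC)$ into a boundary integral via Stokes' theorem and then exploit the decay of the primitive $\Psi^0_{KM}$. The first thing I would record is the scaling invariance $D_{rx}=D_x$ for every $r>0$: the defining condition $rx\in z^\perp$ is unchanged under positive rescaling. Consequently the hypothesis $D_x\cap S=\emptyset$ upgrades to $D_{rx}\cap S=\emptyset$ for all $r>0$, so the entire chain $S$, together with its boundary $\gamma=\gamma(\CC)$, lies in $D-D_{rx}$.

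On $D-D_{rx}$ the form $\ph^0_{KM}(rx)$ is exact, $\ph^0_{KM}(rx)=d\Psi^0_{KM}(rx)$, by Lemma~5.1(ii) of \cite{FK-II}. Since $S$ is a compact $2$-chain contained in this set, Stokes' theorem gives
$$I^0(rx,\CC)=\int_S d\Psi^0_{KM}(rx)=\int_{\partial S}\Psi^0_{KM}(rx)=\int_\gamma \Psi^0_{KM}(rx)=J^0(rx,\CC).$$
Letting $r\rightarrow\infty$, the pointwise decay \eqref{Psi-decay} of $\Psi^0_{KM}(rx)$, which is uniform on the compact curve $\gamma\subset D-D_x$ by the explicit Gaussian-type formulas (5.6), (5.9) of \cite{FK-II}, lets me pass the limit inside the integral exactly as in \eqref{lim-of-Psi}. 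This shows that $\lim_{r\rightarrow\infty}I^0(rx,\CC)$ exists and equals $\lim_{r\rightarrow\infty}J^0(rx,\CC)=0$, which is the asserted vanishing $\P(x,\CC)=0$.

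The one point requiring care is that the identity $I^0=J^0$ and the limit \eqref{lim-of-Psi} were recorded above only under the stronger assumption that $x$ is regular with respect to $\CC$, whereas here I have only the geometric condition $D_x\cap S=\emptyset$, which may also hold for non-regular $x$. I would therefore note that the Stokes computation uses nothing beyond the containment $\gamma\subset D-D_x$, and this follows from $D_x\cap S=\emptyset$ since $\gamma=\partial S$ belongs to the support of $S$. With $\gamma\subset D-D_x$ in hand, both the exactness of $\ph^0_{KM}$ along $\gamma$ and the decay argument go through verbatim, so no genuine new obstacle arises. The only (very mild) subtlety is thus the verification that the convergence in \eqref{Psi-decay} is uniform on the compact curve $\gamma$, so that the limit may legitimately be interchanged with the integral; this is immediate from the closed-form expression for $\Psi^0_{KM}$ on $D-D_x$.
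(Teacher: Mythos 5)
Your proof is correct and follows the same route as the paper: the hypothesis $D_x\cap S=\emptyset$ forces $\gamma=\partial S$ into $D-D_x$, Stokes gives $I^0(rx,\CC)=J^0(rx,\CC)$, and the decay \eqref{Psi-decay} of $\Psi^0_{KM}(rx)$, uniform on the compact curve $\gamma$, yields the vanishing limit. The extra care you take with $D_{rx}=D_x$ and with the regularity issue only makes explicit what the paper's one-line proof leaves implicit.
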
 
\begin{proof}
If $D_x\cap S$ is empty,  $\Psi_{KM}^0(x)$ is smooth on $\gamma$, 
$$I^0(x,\CC) = J^0(x,\CC),$$
by Stokes, and 
$$\lim_{r\rightarrow\infty} I^0(r x, \CC) =\lim_{r\rightarrow\infty} J^0(r x, \CC)=0.$$
\end{proof}
\begin{cor} If $\P(x,\CC) \ne 0$, then $D_x\cap S \ne \emptyset$. 
\end{cor}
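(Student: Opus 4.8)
The plan is to obtain the Corollary immediately as the contrapositive of the Lemma just established, which I take as given. Concretely, I would argue by contradiction: suppose $\P(x,\CC)\ne 0$ but that, contrary to the desired conclusion, $D_x\cap S=\emptyset$. The Lemma asserts precisely that under the hypothesis $D_x\cap S=\emptyset$ the limit $(\ref{coho-lim})$ exists and equals $0$, i.e. $\P(x,\CC)=0$. This contradicts $\P(x,\CC)\ne 0$, and hence $D_x\cap S\ne\emptyset$. No further computation is required.

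It is worth recording where the actual content sits, so that the brevity of this step is not misleading. All of the substance is carried by the Lemma, whose proof rests on two imported facts: first, the existence of the primitive $\ph^0_{KM}(x)=d\Psi^0_{KM}(x)$ on $D-D_x$ from Lemma~5.1(ii) of \cite{FK-II}, which lets Stokes' theorem convert the surface integral $I^0(x,\CC)$ into the boundary integral $J^0(x,\CC)=\int_\gamma\Psi^0_{KM}(x)$ whenever $S\subset D-D_x$; and second, the decay estimate $(\ref{lim-of-Psi})$, which forces $J^0(rx,\CC)\to 0$ as $r\to\infty$. The hypothesis $D_x\cap S=\emptyset$ is exactly what guarantees $\Psi^0_{KM}(x)$ is smooth on all of $S$, hence on $\gamma=\partial S$, so that Stokes applies with no boundary singularities to remove.

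Consequently I do not anticipate any genuine obstacle at the level of the Corollary itself: it is a purely logical restatement of the Lemma (and of part (iii) of Theorem~\ref{theoA}). If there were a difficulty to flag, it would live one step earlier, in verifying that $S\subset D-D_x$ really does follow from $D_x\cap S=\emptyset$ (immediate, since $\gamma=\partial S\subset S$) and that the primitive and its decay behave as claimed — but both of these are already in hand from \cite{FK-II} and from $(\ref{lim-of-Psi})$.
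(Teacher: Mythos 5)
Your proposal is correct and is exactly the paper's argument: the Corollary is stated there as the immediate contrapositive of the preceding Lemma, whose proof is the Stokes-plus-decay argument you describe. The one small point worth noting, which your contradiction phrasing already handles, is that the paper reads ``$\P(x,\CC)\ne 0$'' as including the case where the limit fails to exist, and the Lemma's conclusion (the limit exists and equals $0$) rules that out as well.
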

Here the condition $\P(x,\CC) \ne 0$ includes the possibility that the limit does not exist. 

Now the analogue of the computations in \cite{kudla-int} and \cite{FK-II} yields the following. 
\begin{prop}\label{J0-formula}
For $x\in \text{\rm Reg}(\CC)$, 
$$J^0(x,\CC) = \int_\gamma \Psi_{KM}^0(x) = \frac14 \sum_{j=1}^N \bigg(\ E_2(C_j,C_{j+1},x\sqrt{2}) - \sgn(x,C_j)\,\sgn(x,C_{j+1})\ \bigg).$$
\end{prop}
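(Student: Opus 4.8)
The plan is to reduce the line integral over the closed $N$-gon $\gamma$ to a sum of integrals over its individual geodesic edges $\gamma_j$, to evaluate each edge integral by the single-geodesic computation already carried out in \cite{kudla-int} and \cite{FK-II}, and then to reassemble the edge contributions at the vertices $z_j$ into the higher error functions $E_2(C_j,C_{j+1},\cdot)$. First, since $\gamma=\sum_j\gamma_j$, linearity gives
$$J^0(x,\CC)=\sum_{j=1}^N\int_{\gamma_j}\Psi_{KM}^0(x),$$
and for $x\in\Reg(\CC)$ each $\gamma_j$ lies in $D-D_x$, so every term is well defined. The key structural observation is that $\gamma_j$ lies entirely in the totally geodesic sub-symmetric space $D'_{C_j}\cong D(V_j)$, with $V_j=C_j^\perp$ of signature $(m-2,1)$: indeed $\gamma_j(s)=[C_j,(s-1)C_{j-1}+sC_{j+1}]$ always contains $C_j$, and under the identification $D(V_j)\cong D'_{C_j}$ it is the geodesic in the hyperbolic space $D(V_j)$ joining the negative line spanned by $-C_{j-1\perp j}$ (at $s=0$, the vertex $z_{j-1}$) to the negative line spanned by $C_{j+1\perp j}$ (at $s=1$, the vertex $z_j$), where $C_{i\perp j}$ is the projection of $C_i$ to $V_j$. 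The $N$-gon conditions \eqref{N-gon-c1}--\eqref{N-gon-c3} guarantee that $-C_{j-1\perp j}$ and $C_{j+1\perp j}$ are negative and lie in a single nap of the light cone of $V_j$, so each $\gamma_j$ is an honest geodesic arc and the single-geodesic formula applies without degeneration.

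Next I would pull $\Psi_{KM}^0(x)$ back to $D'_{C_j}$ and invoke the restriction computation of \cite{FK-II}: on the hyperbolic subspace $D(V_j)$ the primitive $\Psi_{KM}^0(x)$ reduces to the signature $(m-2,1)$ primitive attached to the projection $\pr_{V_j}(x)$ (up to the Gaussian factor in the $C_j$-direction), and the integral of such a $1$-form over a geodesic joining two negative lines is evaluated exactly as in the geodesic lemmas of \cite{kudla-int,FK-II}. This produces, for each edge, an endpoint contribution at $z_{j-1}$ and one at $z_j$, each a one-dimensional (``edge'') error function of $E_1$-type integrated along the relevant line inside the defining $2$-plane, together with the corresponding leading sign term.

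The decisive step is the reassembly at the vertices. Each vertex $z_j=[C_j,C_{j+1}]$ is shared by the edge $\gamma_j$, which approaches $z_j$ along the $C_{j+1}$-direction while pinned to the wall $C_j^\perp$, and by the edge $\gamma_{j+1}$, which leaves $z_j$ along the $C_j$-direction while pinned to $C_{j+1}^\perp$. I would show that these two one-dimensional edge contributions, carried by the two lines $\R C_j$ and $\R C_{j+1}$ spanning $z_j$, combine into the full two-dimensional integral over $z_j$ defining $E_2(C_j,C_{j+1},x\sqrt2)$; this is the ``corner equals two edges'' degeneration reflecting the boundary behaviour of $E_2$ into products of $E_1$ and $\sgn$ (cf. the error-function relations in \cite{ABMP}). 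Collecting the non-decaying leading terms at the same time yields the $-\sgn(x,C_j)\,\sgn(x,C_{j+1})$ summands, and tracking the normalization gives the overall factor $\frac14$. Summing over $j$ then produces the asserted formula.

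The hardest part will be this vertex reassembly: one must match orientations, the induced measure $dy$ on the $2$-plane $z_j$, and the $\sqrt2$-normalizations so that the two adjacent edge integrals (one in the $C_j$-direction relative to $C_{j+1}^\perp$, the other in the $C_{j+1}$-direction relative to $C_j^\perp$) glue to exactly $E_2(C_j,C_{j+1},x\sqrt2)$ and not to some other combination. A secondary but necessary check is that the hypotheses of the borrowed single-geodesic computations are precisely those furnished by \eqref{N-gon-c1}--\eqref{N-gon-c3}, so that the formula of \cite{kudla-int,FK-II} may be applied edgewise verbatim.
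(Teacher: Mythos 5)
Your plan follows the paper's proof essentially step for step: decompose over the edges $\gamma_j\subset D'_{C_j}$, pull back $\Psi^0_{KM}$ via the restriction formula of \cite{FK-II} (Corollary~6.3 there, which expresses $\kappa_j^*\Psi^0_{KM}(x)$ as a Gaussian $t$-integral of the signature $(m-2,1)$ form $\ph^{V_j,0}_{KM}(tx_{\perp C_j})$, rather than of a lower primitive as you phrase it), evaluate the geodesic integral from $-C_{j-1\perp j}$ to $C_{j+1\perp j}$ by Zwegers' identity to get two $E_1$ endpoint terms per edge, and reassemble the two contributions meeting at each vertex $z_j=[C_j,C_{j+1}]$ into $\frac14\big(E_2(C_j,C_{j+1},x\sqrt2)-\sgn(x,C_j)\sgn(x,C_{j+1})\big)$ via the recursion formula (Proposition~7.3 of \cite{FK-II}). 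The "vertex reassembly" you flag as the hardest step is exactly what that recursion formula packages, so your outline is correct and coincides with the paper's argument.
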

\begin{proof} 
%
Using the formulas from \cite{FK-II}, we have  
\begin{align*}
\int_\gamma \Psi^0_{KM}(x) & = \sum_j \int_{\gamma_j} \Psi^0_{KM}(x)\\
\nass
{}&= -\sum_j \sqrt{2}\,(x,\und{C_j})\int_1^\infty e^{-2\pi t^2(x,\und{C_j})^2}\int_{\gamma_j} \ph_{KM}^{V_j,0}(t x_{\perp C_j}) \,dt\\
\nass
{}&= -\sum_j \sqrt{2}\,(x,\und{C_j})\int_1^\infty e^{-2\pi t^2(x,\und{C_j})^2} I^0(-C_{j-1\perp j},C_{j+1\perp j};t x_{\perp j}) \,dt\\
\nass
{}&= -\frac12\sum_j (\sqrt{2}x,\und{C}_j) \int_1^\infty e^{-2\pi t^2(x,\und{C_j})^2}  \big[\ E_1(C_{j+1\perp j},tx_{\perp j}\sqrt{2}) + E_1(C_{j-1\perp j},t x_{\perp j}\sqrt{2})\ \big]\,dt\\
\nass
{}&= \frac14\left[\,\sum_j E_2(C_j,C_{j+1};x\sqrt{2}) - \sum_j \sgn(C_j,x)\,\sgn(C_{j+1},x)\ \right].
\end{align*}
Here in the first step we use the relation of Corollary~6.3,
$$\kappa_j^*\Psi_{KM}^0(x) = - \sqrt{2}\,(x, \und{C}_j)\,\int_1^\infty e^{-2\pi t^2 (x, \und{C}_j)^2} \ph_{KM}^{V_j,0}(t x_{\perp C_j})\,dt,$$
where we write $\kappa_j:D(V_j)\lra D(V)$ for the standard embedding. 
This reduces the integral along $\gamma_j$ in $D'_{C_j}$ to the integral yielding 
Zwegers' identity.  In the last step, we use 
the recursion formula of Proposition~7.3 of \cite{FK-II}.  Recall,  that $\und{C} = |(C,C)|^{-\frac12} C$. 
\end{proof}

\begin{prop}\label{prop-identity} For $x\in \text{\rm Reg}(\CC)$, the limit (\ref{coho-lim}) defining $\P(x;\CC)$ exists and 
\begin{align*}
I^0(x;\CC)& = \P(x;\CC)+ J^0(x;\CC)\\
\nass
{}&= \P(x;\CC) + \frac14 \sum_{j=1}^N \bigg(\ E_2(C_j,C_{j+1},x\sqrt{2}) - \sgn(x,C_j)\,\sgn(x,C_{j+1})\ \bigg).
\end{align*}
\end{prop}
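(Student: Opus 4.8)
The plan is to upgrade the vanishing case treated in the Lemma above to an arbitrary regular $x$ by excising the locus $D_x\cap S$ where $\Psi^0_{KM}(x)$ is singular, and then to identify the resulting boundary contribution with $\P(x;\CC)$ by scaling $x\mapsto rx$ and letting $r\to\infty$. Concretely, since $x\in\text{\rm Reg}(\CC)$ gives $D_x\cap\gamma=\emptyset$, and since $I^0(x,\CC)$ is independent of the choice of $S$ with $\partial S=\gamma$ (because $\ph^0_{KM}(x)$ is closed), I first choose $S$ transverse to the codimension-two cycle $D_x$, so that $D_x\cap S=\{p_1,\dots,p_\ell\}$ is a finite set of interior points. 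Excising small disks $B_k(\epsilon)$ about the $p_k$ and applying Stokes to $\ph^0_{KM}(x)=d\Psi^0_{KM}(x)$ on $S_\epsilon=S\setminus\bigcup_k B_k(\epsilon)$, on which $\Psi^0_{KM}(x)$ is smooth, yields
\[
\int_{S_\epsilon}\ph^0_{KM}(x)=\int_\gamma\Psi^0_{KM}(x)-\sum_k\int_{\partial B_k(\epsilon)}\Psi^0_{KM}(x).
\]

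Letting $\epsilon\to0$, the left side converges to $I^0(x,\CC)$ since $\ph^0_{KM}(x)$ is smooth, while the first term on the right is exactly $J^0(x,\CC)$. Setting $R(x,\CC):=-\lim_{\epsilon\to0}\sum_k\int_{\partial B_k(\epsilon)}\Psi^0_{KM}(x)$ — a sum of local residues of $\Psi^0_{KM}(x)$ about $D_x$, the limit existing by a second application of Stokes on each punctured disk — I obtain the bookkeeping identity $I^0(x,\CC)=J^0(x,\CC)+R(x,\CC)$ for every regular $x$ and the same $S$.

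The key point, and the main obstacle, is to prove that $R(x,\CC)$ is invariant under $x\mapsto rx$ for $r>0$. Because $D_{rx}=D_x$, the intersection points $p_k$ and the transverse chain $S$ are unchanged as $r$ varies, so it suffices to show that each local residue of $\Psi^0_{KM}(rx)$ about $D_x$ is independent of $r$. Such a residue is the period of $\ph^0_{KM}(rx)$ over a small linking circle normal to $D_x$; using the explicit description of $\Psi^0_{KM}$ on $D-D_x$ in (5.9) and (5.6) of \cite{FK-II}, which exhibits $\ph^0_{KM}$ as a Thom form for $D_x$, this period equals the universal normal-fiber integral and hence does not depend on $r$. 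Therefore $R(rx,\CC)=R(x,\CC)$ for all $r>0$.

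With this in hand the proposition follows. Applying the identity to $rx$ and letting $r\to\infty$, the term $J^0(rx,\CC)$ tends to $0$ by (\ref{lim-of-Psi}), while $R(rx,\CC)=R(x,\CC)$ is constant; hence $\lim_{r\to\infty}I^0(rx,\CC)$ exists and equals $R(x,\CC)$, which by (\ref{coho-lim}) is precisely $\P(x;\CC)$. Taking $r=1$ then gives $I^0(x;\CC)=J^0(x;\CC)+\P(x;\CC)$, and substituting the evaluation of $J^0(x;\CC)$ from Proposition~\ref{J0-formula} produces the second displayed equality. The only nontrivial input is the $r$-independence of the boundary residue established in the preceding paragraph; the remainder is an application of Stokes' theorem together with the decay (\ref{lim-of-Psi}).
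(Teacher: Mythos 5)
Your argument is correct, and it reaches the identity by a genuinely different route from the paper. The paper never excises the singular locus: it proves that $I^0(r^{\frac12}x;\CC)-J^0(r^{\frac12}x;\CC)$ is independent of $r$ by differentiating in $r$, using $r\frac{\partial}{\partial r}\ph_{KM}^0(r^{\frac12}x)=d\psi_{KM}^0(r^{\frac12}x)$ together with $\frac{\partial}{\partial r}\Psi_{KM}^0(r^{\frac12}x)=r^{-1}\psi_{KM}^0(r^{\frac12}x)$; since $\psi^0_{KM}$ is a polynomial times a Gaussian and hence smooth on all of $D$, Stokes applies on all of $S$ and the $r$-derivative vanishes, with no transversality or excision needed. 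Your version instead produces the constant as a sum of residues of $\Psi^0_{KM}$ at the points of $D_x\cap S$, which has the advantage of exhibiting $4\,\P(x;\CC)$ from the outset as a signed intersection number $D_x\cdot S$, essentially anticipating Proposition~\ref{prop-Z} and part (iv) of Theorem~\ref{theoA}, which the paper establishes separately by a tube-and-homotopy argument in Section 5. The one step you should tighten is the $r$-independence of the residues: as phrased (``the period of $\ph^0_{KM}(rx)$ over a small linking circle'') it conflates the integral of the $1$-form $\Psi^0_{KM}(rx)$ over a shrinking circle with the integral of the $2$-form $\ph^0_{KM}(rx)$ over the full normal fibre, and $\ph^0_{KM}(rx)$ is a Thom form only in the normalization $\int_{D^\vee_{x,z_1}}\ph^0_{KM}(rx)=\frac14$, not as a compactly supported representative. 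The cleanest repair is to observe that
\begin{equation*}
\Psi_{KM}^0(r^{\frac12}x)-\Psi_{KM}^0(x)=\int_1^{r}\psi^0_{KM}(t^{\frac12}x)\,t^{-1}\,dt
\end{equation*}
is an integral of smooth forms over a compact interval and therefore extends smoothly across $D_x$, so the two forms have identical residues; alternatively one can equate the residue with $-\int_{D^\vee_{x,z_1}}\ph^0_{KM}(rx)$ by Stokes on the fibre and the decay of $\Psi^0_{KM}$ at infinity, and then invoke Proposition~6.2 of \cite{KM.II}. With either justification in place, the remaining steps (perturbing $S$ to be transverse rel $\gamma$, the Cauchy criterion for the residue limits, and $J^0(rx;\CC)\to 0$) are all sound, and your conclusion agrees with the paper's.
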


\begin{cor}  On the regular set $\text{\rm Reg}(\CC)$, the quantity 
$$\P(x;\CC) - \frac14 \sum_{j=1}^N  \sgn(x,C_j)\,\sgn(x,C_{j+1})$$
is the restriction 
of the continuous function 
$$I^0(x;\CC) -   \frac14 \sum_{j=1}^N E_2(C_j,C_{j+1},x\sqrt{2})$$
on $V$. 
\end{cor}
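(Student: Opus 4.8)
The plan is to read the statement off from Proposition~\ref{prop-identity} after a trivial rearrangement, the only genuine work being to check that the resulting right-hand side extends continuously across the walls $V_j = C_j^\perp$.

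First I would rewrite the identity of Proposition~\ref{prop-identity}. For $x\in \text{\rm Reg}(\CC)$ that proposition asserts
$$I^0(x;\CC) = \P(x;\CC) + \frac14\sum_{j=1}^N\Big(E_2(C_j,C_{j+1},x\sqrt2) - \sgn(x,C_j)\,\sgn(x,C_{j+1})\Big),$$
and transposing the $E_2$-terms to the left and the sign products to the right gives
$$\P(x;\CC) - \frac14\sum_{j=1}^N \sgn(x,C_j)\,\sgn(x,C_{j+1}) = I^0(x;\CC) - \frac14\sum_{j=1}^N E_2(C_j,C_{j+1},x\sqrt2)$$
as functions on $\text{\rm Reg}(\CC)$. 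Thus the assertion reduces to showing that the right-hand side, a priori considered only on $\text{\rm Reg}(\CC)$, is the restriction of a function continuous on all of $V$.

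Next I would verify that continuity term by term. For $I^0(x;\CC) = \int_S \ph_{KM}^0(x)$ the point is that $\ph_{KM}^0(x)=\ph_{KM}^0(x,z)$ is the non-Gaussian part of a Schwartz form valued in $A^2(D)$ and is therefore smooth jointly in $x$ and $z$, while $S$ is a compact $2$-chain; differentiation under the integral sign is then justified over all of $V$, so $I^0(x;\CC)$ is smooth, in particular continuous, with no singularity along the $V_j$. (This is in contrast to the primitive $\Psi_{KM}^0(x)$, which is only defined away from $D_x$.) For the generalized error functions I would use their integral representation from \cite{ABMP}: in $E_2(C_j,C_{j+1},x\sqrt2)$ the variable $x$ enters only through the orthogonal projection $\pr_{z_j}(x)$ inside a Gaussian on the negative $2$-plane $z_j=[C_j,C_{j+1}]$, whereas the two sign factors depend only on the variable of integration. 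Since $x\mapsto\pr_{z_j}(x)$ is linear and the Gaussian decay dominates all $x$-derivatives, each $E_2(C_j,C_{j+1},x\sqrt2)$ is smooth in $x$ on all of $V$, again crossing the walls $V_j$ without difficulty.

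Combining these, the function $I^0(x;\CC) - \frac14\sum_{j=1}^N E_2(C_j,C_{j+1},x\sqrt2)$ is continuous on $V$, and by the first step its restriction to $\text{\rm Reg}(\CC)$ equals $\P(x;\CC) - \frac14\sum_{j=1}^N \sgn(x,C_j)\,\sgn(x,C_{j+1})$, which is the claim. There is no serious obstacle: the algebra is trivial and the only substantive input is the everywhere-continuity of the two pieces, guaranteed by the compactness of $S$ and the smoothness of $\ph_{KM}^0$ and of $E_2$. The genuine content is conceptual rather than technical, in that each of $\P(x;\CC)$ and $\frac14\sum_j \sgn(x,C_j)\,\sgn(x,C_{j+1})$ jumps as $x$ crosses a wall $V_j$, yet the two families of jumps cancel so that the difference extends continuously; this is precisely the matching that underlies the later interpretation of $4\,\P(x;\CC)$ as a linking number.
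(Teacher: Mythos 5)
Your proposal is correct and follows essentially the same route as the paper, which states the corollary as an immediate rearrangement of Proposition~\ref{prop-identity} and later (at equation (\ref{big-identity})) simply asserts the continuity of $I^0(x;\CC) - \frac14\sum_j E_2(C_j,C_{j+1},x\sqrt2)$ on all of $V$. Your explicit justification of that continuity (compactness of $S$ together with smoothness of $\ph_{KM}^0$ in $x$, and the Gaussian-convolution form of $E_2$) fills in details the paper leaves implicit but introduces no new idea.
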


\begin{proof}[Proof of Proposition~\ref{prop-identity}]
The point is the following result, which reinvents the wheel. 

\begin{lem} For $x$ regular with respect to $\CC$ and for $r>0$, 
the difference 
$$I^0(rx;\CC) - J^0(rx;\CC), \qquad r>0,$$
is  independent of $r$. In particular, the limit (\ref{coho-lim}) exits
due to (\ref{lim-of-Psi}). 
\end{lem}
\begin{proof} 
From the definition, cf. section 5 of \cite{FK-II} and with the scaling by $r^{\frac12}$ used there, we have
\begin{align*}
\Ps_{KM}^0(r^{\frac12}x) &= -\int_{1}^\infty \psi^0_{KM}(t^{\frac12}r^{\frac12}x)\,t^{-1}\,dt\\
\nass
{}&=-\int_{r}^\infty \psi^0_{KM}(t^{\frac12}x)\,t^{-1}\,dt,
\end{align*}
where $\psi^0_{KM}(x)$ is the Schwartz $1$-form defined in (5.6) of \cite{FK-II}. 
Then
$$\frac{\d}{\d r} \Ps_{KM}^0(r^{\frac12}x) = r^{-1}\,\psi^0_{KM}(r^{\frac12}x).$$
Recall that, for $R(x,z) = |(\pr_z(x),\pr_z(x))|$, 
$$\psi^0(t^{\frac12}x) =\big(\,\text{form valued polynomial in $t^{\frac12}x$}\,\big) \cdot e^{-2\pi t R(x,z)},$$
so that the integral only makes sense when $R(x,z)>0$, i.e., on $D-D_x$, and the exponential decay (\ref{Psi-decay}) holds on this set. 
On the other hand, by (i) of Lemma~5.1, we have 
$$r \frac{\d}{\d r} \ph_{KM}^0(r^{\frac12}x)) = d\psi_{KM}^0(r^{\frac12}x).$$
Thus, 
\begin{align*}
&r\,\frac{\d}{\d r} \big(\ I^0(r^{\frac12}x;S) - J^0(r^{\frac12}x;S) \ \big)\\
\nass
{}&=\int_S r\frac{\d}{\d r}\ph_{KM}^0(r^{\frac12}x) - \int_{\d S} r\frac{\d}{\d r}\Psi_{KM}^0(r^{\frac12}x)\\
\nass
{}&=\int_S d\psi_{KM}^0(r^{\frac12}x)  - \int_{\d S} \psi^0(r^{\frac12}x)\\
\nass
{}&=0.\qedhere
\end{align*}
\end{proof}
This proves the proposition.
\end{proof}

A crucial property of the form $\ph_{KM}^0(x)$ is the following, whose proof will be given in the next section.  

\begin{prop}\label{prop-Z}  For $x$ regular with respect to $\CC$, $4\,\P(x,\CC)$ is in $\Z$.
\end{prop}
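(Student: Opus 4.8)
The plan is to identify $4\,\P(x,\CC)$ with the linking number of the loop $\gamma=\gamma(\CC)$ and the cycle $D_x$, which is manifestly an integer. Throughout we may assume $(x,x)>0$: otherwise $D_x=\emptyset$, the form $\Psi_{KM}^0(x)$ is globally smooth on a neighborhood of $S$, and Stokes gives $\P(x,\CC)=0\in\Z$. Recall the geometry of the situation (the relevant part of Theorem~\ref{theoA}(iv)): for $(x,x)>0$ the set $D_x\cap D^+$ is a connected complex-codimension-one, hence oriented real-codimension-two, submanifold of the contractible space $D^+$, and $D^+-(D_x\cap D^+)$ is a punctured-disk bundle over it, so that $\pi_1(D^+-(D_x\cap D^+))=\Z$ canonically. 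For $x$ regular the loop $\gamma$ lies in this complement, and its class $[\gamma]\in\Z$ is what I will match to $4\,\P(x,\CC)$.

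First I would fix a convenient $2$-chain. Since $\ph_{KM}^0(x)$ is closed, the integral $I^0(x,\CC)=\int_S\ph_{KM}^0(x)$, and hence $\P(x,\CC)=I^0(x,\CC)-J^0(x,\CC)$ by Proposition~\ref{prop-identity}, is independent of the choice of $S$; so I may take $S$ to meet $D_x$ transversally. By compactness of $S$ and transversality, $S\cap D_x$ is then a finite set of interior points $p_1,\dots,p_k$ (interior since $\gamma$ avoids $D_x$), each carrying an intersection sign $\epsilon_i=\pm1$, and $\sum_i\epsilon_i$ is the signed intersection number $S\cdot D_x$, which is the integer winding number $[\gamma]$.

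Next I would localize $\P$ to these points. Excising a small disk $\delta_i\ni p_i$ with boundary circle $\ell_i$, the relation $\ph_{KM}^0(x)=d\Psi_{KM}^0(x)$ holds on $S-\bigcup_i\delta_i\subset D-D_x$, so Stokes gives
$$\P(x,\CC)=I^0(x,\CC)-\int_\gamma\Psi_{KM}^0(x)=\sum_{i=1}^k\Big(\int_{\delta_i}\ph_{KM}^0(x)-\int_{\ell_i}\Psi_{KM}^0(x)\Big),$$
and a short annulus argument shows each summand is independent of the radius of $\delta_i$. Shrinking $\delta_i\to\{p_i\}$ kills $\int_{\delta_i}\ph_{KM}^0(x)$ and exhibits each term as (minus) the period of $\Psi_{KM}^0(x)$ on an infinitesimal circle linking $D_x$. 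Equivalently, using the $r$-independence of $I^0(rx)-J^0(rx)$ together with the Gaussian concentration of $\ph_{KM}^0(rx)=(\text{polynomial})\cdot e^{-2\pi r^2 R(x,z)}$ onto $D_x=\{R(x,z)=0\}$ as $r\to\infty$, one finds $\P(x,\CC)=\sum_i t\,\epsilon_i$, where $t$ is a single universal constant, namely the integral of the local model of $\ph_{KM}^0$ over a transverse $2$-disk.

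The main obstacle is the evaluation of this local normalization $t$. Since $z$ is negative-definite, $R(x,z)=-(\pr_z x,\pr_z x)\ge 0$ vanishes exactly along $D_x$ and, by transversality, is a nondegenerate quadratic form in the two normal directions; Laplace's method then reduces $t$ to a two-dimensional Gaussian integral of the transverse part of the Kudla–Millson form, and carrying out this model computation should give $4t=\pm1$, the sign recording the orientation convention. Then
$$4\,\P(x,\CC)=4t\sum_{i=1}^k\epsilon_i=\pm\,(S\cdot D_x)=\pm\,[\gamma]\in\Z,$$
which proves the proposition and at the same time identifies $4\,\P(x,\CC)$ with the winding number of $\gamma$ about $D_x$, as in parts (b),(c) of Theorem~\ref{theoA}(iv). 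I expect the delicate points to be the rigorous justification of the concentration (dominated convergence combined with the quadratic nondegeneracy of $R(x,\cdot)$ normal to $D_x$) and pinning down the normalization $4t=\pm1$ with the correct sign, for which the explicit shape of $\ph_{KM}^0$ and $\Psi_{KM}^0$ recorded in \cite{FK-II} will be needed.
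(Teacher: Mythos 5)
Your route is genuinely different from the paper's and is viable in outline, but it defers exactly the step that carries the content. The paper does not localize at transverse intersection points of a chosen $S$; instead it takes a tube $\TT_\nu$ around $D_x^+$, uses $\pi_1(D^+-D_x^+)\simeq\Z$ to write $\gamma(\CC)$ as homotopic to $k$ times an explicit generator (the boundary of a disk $\Delta_i$ inside the complete complementary cycle $D^\vee_{x,z_0}$, the sub-symmetric space attached to $V^\vee_{x,z_0}=z_0+\R x$ of signature $(1,2)$), realizes the homotopy as a $2$-chain $\hbold$ with $\d\hbold=\gamma(\CC)-k\gamma_{x,z_0,\zeta_0}$, and takes $S=\hbold+k\,S_{x,z_0}$. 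Then $\int_{\hbold}\ph^0_{KM}(rx)\to0$ because $R(x,\cdot)$ is bounded below off the tube, and the remaining term is handled by the \emph{global} Thom-form identity $\int_{D^\vee_{x,z_0}}\ph^0_{KM}(rx)=\frac14$ of Proposition~6.2 of \cite{KM.II}, giving $4\,\P(x;\CC)=k$ directly. Your excision-and-concentration argument replaces the homotopy by transversality and intersection numbers, which is fine (the identity $S\cdot D_x=[\gamma]$ is the standard intersection/linking duality, valid here since $D^+$ is contractible and $D_x^+$ is a connected oriented codimension-two cycle), and your annulus/Stokes bookkeeping for $\P(x,\CC)=\sum_i\bigl(\int_{\delta_i}\ph^0_{KM}(x)-\int_{\ell_i}\Psi^0_{KM}(x)\bigr)$ is correct.

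The gap is the normalization $4t=\pm1$, which you state ``should'' follow from a model Gaussian computation. That computation is precisely the nontrivial input: it is the Kudla--Millson Thom-form property, and without it your argument only shows $4\,\P(x,\CC)=4t\cdot(S\cdot D_x)$ for some unknown real constant $t$, which does not yield integrality. The clean way to close the gap within your framework is to avoid the infinitesimal transverse disk altogether: at each intersection point $p_i$ replace $\delta_i$ by (a large disk in) the complete complementary cycle $D^\vee_{x,p_i}$ through $p_i$, note that the discrepancy between the two is controlled because $R(x,\cdot)$ is bounded below away from $D_x$, and then quote $\int_{D^\vee_{x,p_i}}\ph^0_{KM}(rx)=\frac14$ from \cite{KM.II} (taking care that the cited integral is over both components of the cycle, whereas you integrate over one). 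With that substitution your proof becomes complete and is essentially a localized reformulation of the paper's; note also that a Laplace-method evaluation would a priori only give the leading asymptotics as $r\to\infty$, whereas the cited identity is exact and $r$-independent, which is what makes the limit argument painless.
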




On the regular set, by Proposition~\ref{prop-identity}, we have the identity
\begin{align}\label{big-identity}
I^0(x;\CC) -\frac14 \sum_{j=1}^N E_2(C_j,C_{j+1},x\sqrt{2})  = \P(x;\CC) -\frac14 \sum_{j=1}^N  \sgn(x,C_j)\,\sgn(x,C_{j+1}).
\end{align}
The left hand side here is a continuous function on all of $V$, while, by Proposition~\ref{prop-Z},   
the right side lies in $\frac14\Z$ and hence is locally constant on the regular set. 
Thus the left hand side of (\ref{big-identity}) is constant on $V$. Computing the right hand side on any negative vector $\vbold$ in $\text{Reg}(\CC)$ 
and noting that $D_{\vbold}$ is empty so that $\P(\vbold;\CC)=0$, 
we obtain the constant 
$$-\frac14 \sum_{j=1}^N  \sgn(\vbold,C_j)\,\sgn(\vbold,C_{j+1}) = \frac14\,\wnat(\CC).$$
In particular, this value is independent of the choice of $\vbold$, as claimed.  

Thus  we have 
$$4\,I^0(x;\CC) =  \wnat(\CC)+\sum_{j=1}^N E_2(C_j,C_{j+1},x\sqrt{2}),$$
valid for all $x$ in $V$, 
as claimed in (i) of Theorem~\ref{theoA}.
Using this expression, we see that 
$$4\,\P(x;\CC) = \wnat(\CC) +  \sum_{j=1}^N  \sgn(x,C_j)\,\sgn(x,C_{j+1})$$
for all $x$ in $V$. 
This completes the proof of parts  (i), (ii) and (iii) of Theorem~\ref{theoA}. 
Part (iv) will be proved in the next section. 

%

\section{Proof of Proposition~\ref{prop-Z}}

\newcommand{\TT}{\mathcal T}

Let $D^+$ be the component of $D$ containing $\gamma=\gamma(\CC)$ and let $D_x^+ = D_x\cap D^+$. 
Let $\TT_\nu$ be the tube of radius $\nu$  around the cycle $D^+_x$ in $D^+$, and take $\nu$ small enough so that $\TT_\nu$ is bounded away from $\gamma(\CC)$. 
In particular, $\gamma$ lies in $D^+ - \TT_\nu$. 
Note that, for $\nu$ sufficiently small, $\TT_\nu-D_x^+$ is a punctured disk bundle over the contractible space $D_x^+$.   Taking a point $\zeta_0\in \gamma$ as base 
point, we have 
$$\pi_1(D^+-\TT_\nu, \zeta_0) = \pi_1(D^+-D^+_x,\zeta_0) \simeq \Z.$$
More explicitly,  let $D^\vee_{x,z_0}$ be the complementary cycle to $D^+_x$ through a point $z_0\in D^+_x$. Here recall that $D^+_x$ is the fixed point locus of the 
involution $\s_x:D^+\lra D^+$ induced by reflection in the subspace $V_x = x^\perp$. Then $D^\vee_{x,z_0}$ is the fixed point locus of the involution 
$\s_x\circ\theta_{z_0}$ where $\theta_{z_0}$ is the Cartan involution at $z_0$. Alternatively, $D^\vee_{x,z_0}$ is 
one component of the cycle associated to the subspace
$$V^\vee_{x,z_0} = z_0+\R x$$
of $V$ of signature $(1,2)$.  Since $D^+$ is the union of the cycles $D^\vee_{x,z_0}$ as  $z_0$ runs over $D_x^+$, we choose $z_0$ so that our base point $\zeta_0$ 
lies in $D^\vee_{x,z_0}$.  
We choose an isomorphism $D^\vee_{x,z_0} \simeq \H$ with $z_0\mapsto i$ so that $\TT_\nu\cap D^\vee_{x,z_0}$ maps to a small disk about  $i$.  The image in $\H$ of the 
base point $\zeta_0$ lies outside of this disk and we choose a disk $\Delta_i$ in $\H$ centered at $i$ and passing through this image. 
A generator of $\pi_1(D^+-\TT_\nu, \zeta_0)$ is then given by the image $\gamma_{x,z_0, \zeta_0}$ in $D^+-\TT_\nu$ of the  (counterclockwise) boundary of $\Delta_i$.

Our piecewise smooth loop $\gamma(\CC)$ in $D^+-\TT_\nu$ passes through $\zeta_0$ and is homotopic to an integer multiple $k$ of $\gamma_{x,z_0,\zeta_0}$. Moreover the homotopy can be realized 
as a piecewise smooth oriented $2$-chain $\hbold$ in $D-\TT_\nu$, i.e., 
$$\d \hbold = \gamma(\CC) - k\cdot \gamma_{x,z_0,\zeta_0}.$$ Let $S_{x,z_0}$ be the image of $\Delta_i$ in $D^\vee_{x,z_0} \subset D^+$. 
We may then take 
$$S = \hbold +k\cdot S_{x,z_0},$$
so that 
$$I^0(x;\CC) = \int_{\hbold }\ph^0_{KM}(x) + k \cdot \int_{S_{x,z_0}} \ph^0_{KM}(x).$$
Now $R(x,z)$ is uniformly bounded away from $0$ on $D^+-\TT_\nu$, and so 
$$\lim_{r\rightarrow\infty}\int_{\hbold }\ph^0_{KM}(rx) =0.$$
{\bf Claim:} On the other hand, 
$$\lim_{r\rightarrow\infty} \int_{S_{x,z_0}} \ph^0_{KM}(rx)  = \int_{D^\vee_{x,z_0}}\ph^0_{KM}(x) = \frac14,$$
by the Thom form property of $\ph^0_{KM}(x)$, \cite{KM.I}, \cite{KM.II}. 
Thus 
\beq\label{homotopy-mult}
4\,\P(x;\CC) = k.
\eeq
To justify the limit, note that 
$$\int_{S_{x,z_0}} \ph^0_{KM}(rx) = \int_{D^\vee_{x,z_0}}\ph^0_{KM}(rx) - \int_{D^\vee_{x,z_0}-S_{x,z_0}}\ph^0_{KM}(rx).$$
But $R(x,z)$ is bounded below by some $R_0$, uniformly for $z\in D^+-\TT_\nu$ and the second integral here 
decays at least like $C \exp(-\pi r^2 \frac12 R_0)$ as $r$ goes to infinity. 
On the other hand, by Proposition~6.2 of \cite{KM.II}, we have
$$\int_{D^\vee_{x,z_0}}\ph^0_{KM}(rx) =\frac14.$$
Note that the integral in \cite{KM.II} is taken over both connected components, whereas our domain of integration $D^\vee_{x,z_0}$ 
is connected. 

{\bf Remark:}  The formula (\ref{homotopy-mult}) gives a nice explanation of the integer $4\,\P(x;\CC)$.

\section{Examples:  Signature $(1,2)$}\label{section-more-more}

We now consider the case where  $(V,Q)$ is the 
quadratic space of traceless $2 \times 2$ matrices with the quadratic form $Q(X) = \det(X)$. The corresponding bilinear form $(X,Y) = -\tr(XY)$ has signature is $(1,2)$. 
There are identifications
$$\H^\pm \isoarrow \hh^\pm \isoarrow D=D(V)$$
where $\H^\pm = \C-\R$ and
$\hh^\pm = \{ \, X\in V\mid  Q(X)=1\,\}$ given by  
$$z = x+iy \mapsto X(z) = y^{-1}\bpm -x&|z|^2\\ -1&x\epm, \qquad X \mapsto X^\perp = \zbold ,$$
where a basis $u_0$, $u_1$ for the negative $2$-plane $X^\perp$ is properly oriented if  $X\wedge u_0\wedge u_1 \in \bigwedge^3(V)^+$.  Here we orient $V$ by choosing
the ordered orthogonal basis 
\[
e_1= \zxz{}{1}{-1}{}, \qquad  e_2= \zxz{1}{}{}{-1},\qquad  e_3= \zxz{}{-1}{-1}{}
\]
and taking $\bigwedge^3(V)^+ = \R_{>0}\cdot e_1\wedge e_2\wedge e_3$. 
The components of $\hh^\pm$ are determined by $X\in \hh^\pm$ if $\sgn(X,e_1)= \pm1.$
The inverse of the second isomorphism is given as follows. If $u_0$, $u_1$ is an orthogonal basis for the negative $2$-plane $\zbold$, take a vector $X\in \zbold^\perp$
with $Q(X)=1$ and such that $X\wedge u_0\wedge u_1\in \bigwedge^3(V)^+$. 
Explicitly, if $u_0= u_{10}e_1 +  u_{20}e_2 + u_{30}e_3$ and $u_1=u_{11}e_1 +  u_{21}e_2 + u_{31}e_3$, the vector $X$ is given by the hyperbolic cross product
$$X = u_0\cross u_1 = (u_{20}u_{31}-u_{30}u_{21})\, e_1 + 
 (u_{10}u_{31}-u_{30}u_{11}) \,e_2 -  (u_{10}u_{21}-u_{20}u_{11})\, e_3.$$
 (Note the signs for the second and third component).


Vectors $X\in V$ with $Q(X)\ne0$ define cycles in $D$ by 
\begin{align*}
D_X &= \{\, \zbold \in D\mid X\in \zbold^{\perp}\,\},&&\text{if $Q(X)>0$,}\\
\noalign{\vskip -10pt and}
c_X &= \{\, \zbold\in D\mid X\in \zbold \,\},&&\text{if $Q(X)<0$},
\end{align*}
so that $D_X$ is a union of two points $D^\pm_X$ and $c_X$ is a union of two geodesic $c_X^\pm$, one in each component of $D$. 
Note that $D_X$ determines the positive line $\R\cdot X$ when $Q(X)>0$, and $c_X$ determines the negative line $\R\cdot X$ when $Q(X)<0$. 
In the later case $\R\cdot X$ is the intersection of any two distinct points on $c_X$, viewed as negative $2$-planes.  

These cycles 
can be described in $\H^\pm$ in a familiar classical way.  
For 
\[
X = \zxz{b}{2c}{-2a}{-b}=: [a,b,c],\qquad Q(X) = b^2-4ac = d,
\]
we have 
\begin{align*}
(X,X(z))=  \frac{2}{y} (a|z|^2+bx+c) \qquad \text{and} \qquad 
R(X,z)   = - (\pr_{\zbold}(X),\pr_{\zbold}(X)) = \frac{2}{y^2}|az^2+bz+c|^2.
\end{align*}
Then 
\begin{align*}
D_X &= \frac{-b}{2a} \pm \frac{i\sqrt{d}}{2a}, &&\text{if $Q(X)>0$}\\
\noalign{\vskip -10pt and}
c_X &= \{\,z \in \H^\pm; \; a|z|^2+b\Re(z)+c=0\,\},&&\text{if $Q(X)<0$}.
\end{align*}
%
Note that $c_X^+$
 separates $D^+\simeq \H^+$ 
 into two 
 components. We orient the cycle by asserting that the component given by $(X(z),X)>0$ lies to the left of the geodesic and the one given by $(X(z),X)<0$ to the right. Hence for $X=[a,b,c]$ with $a \ne 0$ as above, $c_X^+$ is a semi-circle in $D^+$ oriented clockwise when $a>0$ and the vertical geodesic arising from $X = [0,b,c]$ gives for $b>0$ the oriented half line $(-\tfrac{c}{b} + i \infty, -\tfrac{c}{b})$. 
 
%
%

Given two points $z_1$, $z_2$ in the same component of $\H^\pm$, say $\H^+$, the unique geodesic $c_{12}$ containing both points arises from the negative vector 
\[
Y(z_1,z_2):=X(z_1) \times X(z_2) = \frac{1}{y_1y_2} \zxz{\frac12(|z_1|^2-|z_2|^2)}{x_1|z_2|^2-x_2|z_1|^2}{x_1-x_2}{\frac12(|z_2|^2-|z_1|^2)}.
\]
Indeed, this negative vector is orthogonal to both $X(z_1)$ and $X(z_2)$ and hence lies in $\zbold_1$ and in $\zbold_2$,  so that $\zbold_1$ and $\zbold_2$ both lie on $c_{12}$. 
Moreover, $c_{12}$ is oriented so that it runs from $z_1$ to  $z_2$.  By $\SL_2(\R)$ equivariance, it suffices to check for $z_1=i$ and $z_2= ir$, $r>0$ and this case is immediate. 

Now suppose that we have three distinct points $z_1$, $z_2$ and $z_3$ in $\H^+$ and hence geodesic arcs $c_{12}$ and $c_{23}$ associated to $Y_{12} = X(z_1)\cross X(z_2)$ 
and $Y_{23}= X(z_2)\cross X(z_3)$. The negative vectors $Y_{12}$ and $Y_{23}$ both lie in $\zbold_2\in D^+$ so that it is natural to ask if $[Y_{12},Y_{23}]$ is a properly 
oriented basis. 
\begin{lem}\label{left-right-turn}  The cross product of $Y_{12}$ and $Y_{23}$ is given by
\beq\label{double-cross}
\big(X(z_1)\cross X(z_2)\big)\cross \big(X(z_2)\cross X(z_3)\big) = \a(z_1,z_2,z_3)\, X(z_2).
\eeq
where
\beq\label{a-invar}
\a(z_1,z_2,z_3) = \frac{x_1(|z_2|^2-|z_3|^2)+x_2(|z_3|^2-|z_1|^2)+x_3(|z_1|^2-|z_2|^2)}{2y_1y_2y_3}.\eeq
In particular, 
\begin{equation}
\zbold_2 = [X(z_1)\cross X(z_2), X(z_2)\cross X(z_3)] \qquad \iff \qquad \a(z_1,z_2,z_3) >0.
\eeq
Moreover, $\a(z_1,z_2,z_3)>0$  if and only if the geodesics associated to $X(z_1) \times X(z_2)$ and $X(z_2) \times X(z_3)$ perform a left turn at $z_2$. 
\end{lem}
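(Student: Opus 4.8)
The plan is to convert the geometric left-turn condition into the sign of a single inner product and then match that sign against $\a$.

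First I would use the orientation convention fixed above for the oriented geodesics. The arc $c_{12}$ cut out by the negative vector $Y_{12}=X(z_1)\times X(z_2)$ is oriented so as to run from $z_1$ to $z_2$, and by convention its left-hand side is the region $\{z : (X(z),Y_{12})>0\}$. Since the three points are distinct, the geodesics $c_{12}$ and $c_{23}$ meet only at $z_2$ (they coincide exactly in the degenerate collinear case, where $\a=0$ and there is no turn); hence the arc $c_{23}$ stays on a single side of $c_{12}$ on the segment from $z_2$ to $z_3$. Consequently, passing from $c_{12}$ to $c_{23}$ performs a left turn at $z_2$ if and only if $z_3$ lies strictly to the left of the directed geodesic $c_{12}$, i.e. if and only if $(X(z_3),Y_{12})>0$. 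Pinning down this step --- that the local turning direction at $z_2$ is read off from the global left/right side of $c_{12}$ --- is the main point to argue carefully; everything after it is algebra.

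Next I would rewrite $(X(z_3),Y_{12})$ as a triple product. By the way the hyperbolic cross product was introduced, $(u\times v,w)=2\det(u,v,w)$ in the oriented basis $e_1,e_2,e_3$ (the factor $2$ and the sign corrections on the second and third coordinates reflect $(e_1,e_1)=2$ and $(e_2,e_2)=(e_3,e_3)=-2$). Taking $u=X(z_1)$, $v=X(z_2)$, $w=X(z_3)$ and using cyclicity of $\det$ gives $(X(z_3),Y_{12})=2\det(X(z_1),X(z_2),X(z_3))$.

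Finally I would identify this determinant with $\a$. Since the explicit formula (\ref{a-invar}) for $\a$ is already available, the quickest route is to insert the coordinates of $X(z_1),X(z_2),X(z_3)$ in the basis $e_1,e_2,e_3$ and check by a short determinant computation that $\det(X(z_1),X(z_2),X(z_3))$ equals the right-hand side of (\ref{a-invar}); one finds $\det(X(z_1),X(z_2),X(z_3))=\a$. Conceptually the same identity drops out of the double cross-product formula (\ref{double-cross}): the appropriately normalized BAC--CAB expansion of $Y_{12}\times Y_{23}$, using $(Y_{12},X(z_2))=0$ and $(X(z_2),X(z_2))=2$, collapses to $\det(X(z_1),X(z_2),X(z_3))\,X(z_2)=\a\,X(z_2)$. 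Either way $(X(z_3),Y_{12})=2\,\a$, so the two quantities share a sign, and combining with the first step yields $\a>0$ if and only if $c_{12}$ and $c_{23}$ make a left turn at $z_2$. The only genuine obstacle is the orientation bookkeeping in the first step; once the normalization of the cross product is fixed, the remaining identities are mechanical.
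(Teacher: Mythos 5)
Your argument is correct, and it takes a genuinely different route from the paper's. The paper treats the lemma as a computation: the double cross product identity (\ref{double-cross}) is verified directly from the explicit cross product formula, and the left-turn statement is then settled by $\SL_2(\R)$-equivariance, reducing to the normalized configuration $z_1=i$, $z_2=ir$, $z_3=x+iy$ of the remark, where $\a=(2ry)^{-1}x(1-r^2)$ and the turn direction is visible. You instead characterize the turn intrinsically: the identity $(u\cross v,w)=2\det(u,v,w)$ is correct with the paper's normalization (e.g.\ $(e_1\cross e_2,e_3)=(-e_3,e_3)=2$), so $(X(z_3),Y_{12})=2\det(X(z_1),X(z_2),X(z_3))=2\,\a(z_1,z_2,z_3)$, and the stated convention that the left side of the directed geodesic $c_{12}$ is $\{z:(X(z),Y_{12})>0\}$, combined with the fact that two distinct geodesics in $\H$ meet at most once, converts the sign of this quantity directly into the turn direction. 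This buys a coordinate-free explanation of $\a$ as a triple product and avoids the reduction to a model case; the paper's route is shorter on the page only because the model computation is needed later anyway. Two small points to tighten: your primary argument proves the sign statement but delegates the displayed identity (\ref{double-cross}) itself to an ``appropriately normalized'' BAC--CAB expansion, whose constant you should pin down (it is $1$; e.g.\ note that $Y_{12}\cross Y_{23}$ is orthogonal to both factors, hence proportional to $X(z_2)$, and evaluate the constant by one pairing or by the model case); and the ``in particular'' about $\zbold_2=[Y_{12},Y_{23}]$ should be recorded as an immediate consequence of (\ref{double-cross}) and the definition of proper orientation via $X\wedge u_0\wedge u_1\in\bigwedge^3(V)^+$.
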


\begin{rem}
For example, 
 for $z_1=i$, $z_2 = i r$ and $z_3= x+iy$, the expression in the lemma is $x(1-r^2)$ so that, if $r>1$, the expression is positive 
for $x<0$ (left turn) and negative for $x>0$ (right turn). 
\end{rem}

Here is a recipe for recovering $N$-gon data $\CC= \{ \,C_1, \dots, C_N\,\}$ from a geodesic $N$-gon in $\H\simeq D^+$ with vertices $\mathcal Z= \{\, z_1, \dots, z_N\,\}$.   
The geodesic arc in $D^+$ containing $\zbold_{j-1}=[C_{j-1},C_j]$ and $\zbold_j=[C_j, C_{j+1}]$ is $c_{C_j}^+$, while that containing $\zbold_j$ and $\zbold_{j+1}=[C_{j+1},C_{j+2}]$ 
is $c^+_{C_{j+1}}$.  These arcs must be distinct since otherwise the vectors $C_j$ and $C_{j+1}$ would be collinear, whereas they span the $2$-plane $\zbold_j$. 
Also, if $z_j\in \H$ corresponds to $\zbold_j\in D^+$, then $C_j$ must be proportional to $Y_{j-1 j}=X(z_{j-1})\cross X(z_{j})$ since they determine the same geodesic arc. As observed earlier, the geometry does not change if
the vectors in $\CC$ are scaled independently by positive constants. Up to such a scaling, we may then write $C_j = \e_j Y_{j-1 j}$ for some signs $\e_j = \pm1$. 
Also notice that at each vertex $z_j$, either a left or a right turn in made, since successive arcs are distinct. Let $\tau_j= +1$ for a left turn and $-1$ for a right turn. 
It then follows from Lemma~\ref{left-right-turn} that
$$\zbold_j = [Y_{j-1j}, \tau_j Y_{j j+1}] = [\e_j C_j, \tau_j\e_{j+1}C_{j+1}] = [C_j,C_{j+1}],$$
so that $\e_{j+1}= \e_j \tau_j$. Also note that the geometry does not change if all $C_j$ are scaled by $-1$.
Thus, we may assume that $\e_1=1$ and conclude that, for $j\ge 2$, 
\beq\label{turning-ep}
\e_j = \prod_{1\le i\le j-1} \tau_i.
\eeq
Note that the crucial fact is that the chain of geodesics must close up, so that 
$$\zbold_N= [C_N,C_1] = [Y_{N-1 N},\tau_N Y_{N,1}]= [\e_N C_N,\tau_N \e_1 C_1].$$
Since $\e_1=1$, this amounts to the total turning condition
\beq\label{total-turning}
\prod_{j=1}^N \tau_j=1.
\eeq

Thus we have the following recovery recipe. 
\begin{prop} (i)  Let $\mathcal Z=\{ \, z_1, \dots, z_N\,\}$ be the set of vertices of an $N$-gon in $\H\simeq D^+$ defined by a collection $\CC= \{\, C_1, \dots, C_N\,\}$. Then the geodesic arcs joining successive vertices are distinct and, up to admissible scaling,  
$C_j=\e_j X(z_{j-1})\cross X(z_j)$ where $\e_j$ is determined by the turning signs $\tau_j$ via (\ref{turning-ep}).\hfb 
(ii)
Suppose that  $\mathcal Z=\{ \,z_1,\dots, z_N\,\}$ is an ordered set of $N$ points in $\H$ with no three consecutive points collinear. Let 
$$C_j = \e_j X(z_{j-1})\cross X(z_j),$$
where $\e_j$ is defined by (\ref{turning-ep}) such that the total turning condition (\ref{total-turning}) is satisfied. 
Then the collection 
$$\CC = \{\, C_1, \dots, C_N\,\}$$
satisfies the $N$-gon conditions and the associated $N$-gon has vertices $\z_j$. 
\end{prop}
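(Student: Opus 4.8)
The plan is to treat part (i) as the recovery/uniqueness direction and part (ii) as the converse/existence direction, assembling the discussion that precedes the statement into a clean argument. The two geometric inputs are the characterization of the geodesic through two distinct points of $\H$ as $c_Y^+$ with $Y=X(z_{j-1})\cross X(z_j)$, and Lemma~\ref{left-right-turn}, which reads off the orientation of $\zbold_j$ from the turning sign $\tau_j$; the rest is sign and orientation bookkeeping.

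For part (i), I would start from a collection $\CC$ satisfying the $N$-gon conditions, with vertices $\zbold_j=[C_j,C_{j+1}]$ corresponding to points $z_j\in\H$. First observe that the arc $\gamma_j$ of (\ref{gamma-j}) runs from $\zbold_{j-1}$ to $\zbold_j$ and that every plane it passes through contains $C_j$, so $\gamma_j\subset c_{C_j}^+$; thus the geodesic through $z_{j-1}$ and $z_j$ is $c_{C_j}^+$. These arcs are distinct for successive $j$: if $c_{C_j}^+=c_{C_{j+1}}^+$ then, since $c_X$ determines the negative line $\R X$, the vectors $C_j$ and $C_{j+1}$ would be proportional, contradicting that they span $\zbold_j$. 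Because the geodesic through the distinct points $z_{j-1},z_j$ is the one arising from $Y_{j-1j}=X(z_{j-1})\cross X(z_j)$, the vectors $C_j$ and $Y_{j-1j}$ determine the same negative line, so after the admissible positive rescaling of each $C_j$ we may write $C_j=\e_j Y_{j-1j}$ with $\e_j=\pm1$. Finally I would fix the signs using Lemma~\ref{left-right-turn}, which gives $\zbold_j=[Y_{j-1j},\tau_j Y_{jj+1}]$; comparing with $\zbold_j=[\e_j Y_{j-1j},\e_{j+1}Y_{jj+1}]$ forces $\e_j\e_{j+1}=\tau_j$, i.e.\ $\e_{j+1}=\e_j\tau_j$. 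Since replacing all $C_j$ by $-C_j$ is admissible, I normalize $\e_1=1$ and obtain (\ref{turning-ep}) by induction.

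For part (ii), I would take the points $z_j$ with no three consecutive collinear, set $C_j=\e_j Y_{j-1j}$ with $\e_j$ given by (\ref{turning-ep}) and subject to (\ref{total-turning}), and verify the $N$-gon conditions one at a time. Condition (\ref{N-gon-c1}) holds because $Y_{j-1j}$ is by construction the negative vector spanning the geodesic through the distinct points $z_{j-1},z_j$. Both $Y_{j-1j}$ and $Y_{jj+1}$ lie in $X(z_j)^\perp=\zbold_j$, and they are linearly independent exactly when $z_{j-1},z_j,z_{j+1}$ are not collinear, so $C_j,C_{j+1}$ span the negative $2$-plane $\zbold_j$; this gives (\ref{N-gon-c2}) and identifies the $j$-th vertex with $z_j$ as an unoriented plane. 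The orientation is then pinned down exactly as in (i): the recursion $\e_{j+1}=\e_j\tau_j$ built into (\ref{turning-ep}), together with Lemma~\ref{left-right-turn}, gives $[C_j,C_{j+1}]=[Y_{j-1j},\tau_j Y_{jj+1}]=\zbold_j$ with its $D^+$ orientation, so all vertices lie in the single component $D^+$, which is condition (\ref{N-gon-c3}). The only global consistency to check is at the wrap-around index $j=N$, where $\e_{N+1}=\e_N\tau_N$ must reproduce $\e_1=1$; this is precisely the total turning condition (\ref{total-turning}).

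The main obstacle I anticipate is not any single computation but the orientation bookkeeping: each comparison $[C_j,C_{j+1}]$ versus $[Y_{j-1j},\tau_j Y_{jj+1}]$ must be tracked to the correct component $D^\pm$, and I must be sure the cyclic closing-up constraint is faithfully encoded by (\ref{total-turning}) and not its negation. I would also be careful at the outset of (ii) to read ``no three consecutive collinear'' as including that consecutive points are distinct, so that each cross product $Y_{j-1j}$ is a genuine nonzero negative vector; otherwise (\ref{N-gon-c1}) and (\ref{N-gon-c2}) could fail on degenerate input.
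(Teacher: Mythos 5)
Your part (i) is essentially the paper's own derivation (the ``recipe'' discussion preceding the proposition), and your verification of conditions (\ref{N-gon-c1}) and (\ref{N-gon-c2}) in part (ii) matches the paper's: both reduce to $C_j\cross C_{j+1}$ being a nonzero multiple of $X(z_j)$, which is where the ``no three consecutive points collinear'' hypothesis enters. Where you genuinely diverge is the third $N$-gon condition. The paper proves it by direct computation: it uses $\SL_2(\R)$-equivariance of the cross product to normalize four consecutive points to $i,\, ir,\, x+iy,\, x'+iy'$, reads off $\tau_{j-1}$ and $\tau_j$ as signs of explicit polynomials, and exhibits the left-hand side of (\ref{check-3}) as $\tau_{j-1}\tau_j$ times an explicit quantity whose sign is $-\tau_{j-1}\tau_j$. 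You instead argue conceptually: Lemma~\ref{left-right-turn} together with the recursion $\e_{j+1}=\e_j\tau_j$ gives $C_j\cross C_{j+1}=|\a(z_{j-1},z_j,z_{j+1})|\,X(z_j)$, so every oriented vertex $[C_j,C_{j+1}]$ lies in $D^+$, and (\ref{N-gon-c3}) is then read as the statement that successive vertices lie in the same component. This is cleaner and explains \emph{why} the condition holds, but it uses the converse direction of the equivalence ``(\ref{N-gon-c3}) at index $j$ $\iff$ $[C_{j-1},C_j]$ and $[C_j,C_{j+1}]$ lie in the same component of $D$,'' which the introduction only states informally and only exploits in the forward direction. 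To make your route airtight, record the one-line algebra that the left side of (\ref{N-gon-c3}) equals $(C_j,C_j)\,(C_{j-1\perp j},C_{j+1\perp j})$, note that conditions (\ref{N-gon-c1})--(\ref{N-gon-c2}) make $C_{j\pm1\perp j}$ genuine negative vectors in the Lorentzian space $V_j$, and invoke the reverse Cauchy--Schwarz fact that two negative vectors there lie in the same nap if and only if their inner product is negative. With that added, your argument is complete; your reading of the non-degeneracy hypothesis (consecutive points distinct) is also the intended one.
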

\begin{proof} It remains to check that the collection $\CC$ in part (ii) satisfies the $N$-gon conditions. The cross product is equivariant for the action of $\SL_2(\R)$, i.e., 
$(g\cdot A)\cross (g\cdot B) = g\cdot (A\cross B)$. Since 
$$\a(i, ir, x+iy) = (2ry)^{-1}\,x\,(1-r^2),$$
the requirements that the $z_j$'s give rise to distinct successive geodesics imply that $\a(z_{j-1},z_j,z_{j+1})\ne 0$ for all $j$ and hence
$C_j\cross C_{j+1}$ is a nonzero multiple of $X(z_j)$. Thus $[C_j,C_{j+1}]$ is a negative $2$-plane. 
We must still check the condition 
\beq\label{check-3}
(C_j,C_j)(C_{j-1},C_{j+1})  - (C_j,C_{j-1})(C_j,C_{j+1})<0,
\eeq
where we note that 
$$\e_{j-1}\e_{j+1} = \tau_{j-1}\tau_j.$$
By equivariance, it suffices to consider the sequence of $4$ points 
$$\{z_{j-2},z_{j-1}, z_{j}, z_{j+1}\} = \{\, i, ir, x+iy, x'+iy'\,\}$$ 
with $r>1$. The turning signs are then 
\begin{align*} 
\tau_{j-1} &= \sgn(\a(i,ir,x+iy)) = \sgn(x (1-r^2))\\
\nass
\tau_j &= \sgn(\a(ir, x+iy,x'+iy')) =\sgn( r^2(x'-x) + x |z'|^2 - x' |z|^2) .
\end{align*}
The quantity on the left side of (\ref{check-3}) is then $\tau_{j-1}\tau_j$ times 
$$(2 r^3 y^3 y')^{-1} (1-r^2)\, x\, (r^2+|z|^2) (r^2 (x-x') + x' |z|^2 - x |z'|^2),$$
so that condition (\ref{check-3}) holds!
\end{proof} 

\begin{rem} Note that the condition (\ref{total-turning}) in the characterization of the $N$-gons arising from collections $\CC$ precludes configurations with an odd number of right turns. For example, a triangle with edges oriented clockwise cannot arise. However, if the $N$-gon defined by $\mathcal Z=\{ \, z_1, \dots, z_N\,\}$ has an even number of left turns one can simply swap the orientation of the $N$-gon to obtain an `admissible' collection $\CC$, and the indefinite theta series to the surface $S$ associated to $\mathcal{Z}$ will be the negative of the one associated to $\CC$. This takes care of $N$ odd. On the other hand, for $N$ even it is easy to construct an $N$-gon with $(N-1)$ left turns and one right turn. In that case Remark~\ref{illegal-N-gons} applies whose formula also can be easily shown directly as follows. The right turn occurs at $z_N$ then use another negative vector $C_N'$ to geodesically cut-off the right turn close to $Z_N$ to bisect $S$ into a (clockwise oriented) triangle formed by $\{\,C_N,C_1,C_N'\}$and an `admissible' $(N+1)$-gon obtained from $\{\, C_1, \dots, C_N\,C_N'\}$. Then combining the two indefinite theta series gives the one given in Remark~\ref{illegal-N-gons}.
 \end{rem}

Next we consider the invariant 
$$w(\CC) = -\sum_j \sgn((e_2,C_j))\,\sgn((e_2, C_{j+1})).$$
Note that 
$$( e_2, Y(z_1,z_2)) = (y_1 y_2)^{-1} (|z_2|^2 - |z_1|^2),$$
so that 
$$ \sgn((e_2, C_j)) = \e_j \,\sgn(|z_j|^2-|z_{j-1}|^2).$$
Since $\e_j \e_{j+1} = \tau_j$, 
\beq\label{one-sign}
\sgn((e_2, C_j)) \,\sgn((e_2, C_{j+1})) = \tau_j \,\sgn(|z_j|^2-|z_{j-1}|^2)\,\sgn(|z_{j+1}|^2-|z_{j}|^2).
\eeq
Thus, the contribution of this term is determined by the turn made at $z_j$ and whether the incoming and outgoing edges at this point
are inward or outward bound with respect to the origin in $\C$, information that can be read off immediately from the configuration $\mathcal Z=\{\,z_1,\dots,z_N\,\}$. 


Here are some examples. Let $z_r = r+i$, for $r\in \Z$. 
For the chain
$$\mathcal Z= \{\,z_0, z_1, \dots, z_{N-1}\,\},$$
only left turns occur, i.e., 
$\tau_j=1$ for all $j$, and for $1\le j \le N-2$ the incident edges are both outbound, so these vertices contribute $2-N$ to the sum.
At the vertices $z_0$ and $z_{N-1}$ the incident edges are have opposite directions and so these two vertices contribute an additional $+2$, 
and $w(\CC) = 4-N$. 

Next let $z'_r = r+i T$ for $T\gg1$ and $r\in \Z$, and consider the chain
$$\mathcal Z = \{\, z_0, \dots, z_{k-1}, z'_{\ell-1}, z'_{\ell-2},\dots, z'_0\, \}, \qquad N=k+\ell.$$
Left turns are made at the vertices $z_0, \dots, z_{k-1}, z'_{\ell-1}$ and $z'_0$, and right turns are made at the remaining vertices $z'_1,\dots, z'_{\ell-2}$.  
Thus we require\footnote{Formally, for $\ell=0$ we obtain the previous example.} that $\ell\ge 2$ is even. At the vertices $z_0$ and $z'_{\ell-1}$, the incident edges are have opposite directions, so these contribute $+2$. 
At the vertices $z_1, \dots, z_{k-1}$, the incident edges are both outbound resulting in a net contribution of $1-k$ 
to $w(\CC)$. At the vertex $z'_0$ the incident edges are both inbound yielding a contribution of $-1$. Finally, at the remaining vertices, $z'_1, \dots, z'_{\ell-2}$
where right turns occur, the incident edges are both inbound, so these contribute $\ell-2$. Hence
$w(\CC) =\ell-k = N-2k.$  Since $k\ge 1$ and $\ell\ge2$  resulting range of values for $w(\CC)$ is 
\begin{align}\label{all-w}
\{4-N, 8-N, \dots, N-8, N-4\}&\qquad\text{for $N$ even,}\\
\nass
\{4-N, 8-N, \dots, N-6, N-2\}&\qquad\text{for $N$ odd.}\notag
\end{align}

\begin{rem} Any quadratic space $V$ of signature $(m-2,2)$ for $m\ge 3$ contains a subspace of signature $(1,2)$ and the $N$-gons 
just described can be realized in $D(V)^+$. Thus  all of the values in (\ref{all-w}) occur in the general case. 
\end{rem}



%

We close the section with some additional explicit examples. 


Consider for $T>1$
\begin{align*}
C_1 &= \zxz{-1}{1}{}{1} = \frac12 e_1 -e_2 - \frac12 e_3, \qquad 
C_{3} = \zxz{1}{1}{}{-1} = \frac12 e_1 + e_2 - \frac12 e_3, \\
C_{2} &=\zxz{}{T^2+1/4}{1}{} = \frac{T^2-\tfrac34}2 e_1- \frac{T^2+\tfrac{5}{4}}2 e_3,   \qquad
C_4 = \zxz{}{-1}{-1}{} = e_3.
\end{align*}
These vectors satisfy the $N$-gon condition, and the corresponding vertices in $\H^+$ are 
\begin{align*}
z_{1}= \frac12 + Ti,
 \quad 
 z_{2}=- \frac12 + Ti,
 \quad
 z_{3}= -\frac12 + \frac{\sqrt{3}}{2}i,
\quad 
z_{4}= -\frac12 + \frac{\sqrt{3}}{2}i. 
 \end{align*}
Note that $c^+_{C_1}$ is the vertical geodesic $x=\tfrac12$ going up, $c^+_{X_3}$ the one $x=-\tfrac12$ going down, $c^+_{C_2}$ is the half circle $|z|^2=T^2+1/4$ going anti-clockwise, and $c^+_{4}$ the half circle $|z|=1$ going clockwise. Hence $\gamma(\mathcal{C})$ is the boundary of the standard fundamental domain for $\SL_2(\Z)$ cut-off at hight $T$ (and connected by the geodesic arc segment), and we can take $S_T(\CC)$ to be region enclosed by  $\gamma(\mathcal{C})$. Further note $w(\CC)=0$ and hence 
\begin{align*}
I(D_X, S_T(\CC)):= \frac14\varepsilon(X;\mathcal C) = \frac14 \sum_{j=1}^4 \sgn(X,C_j)\sgn(X,C_{j+1})
\end{align*}
gives the intersection number $I(D_X, S_T(\CC))$ of the $0$-cycle $D_X$ with the surface $S_T(\mathcal{C})$, that is, $I(D_X, S_T(\CC))=1$ if $D_X$ lies in the interior of $S_T(\mathcal{C})$ and $I(D_X, S_T(\CC))=0$ if $D_X$ lies outside $S_T(\mathcal{C})$. 

We now fix the lattice $L=\{ [a,b,c]; \, a,b,c \in \Z\}$ which is stabilized by $\Gamma=\SL_2(\Z)$. Then 
\[
\int_{S_T(\CC)} \theta(\tau,\varphi_{KM},L)= \frac14  \sum_{j=1}^4  \sum_{X \in L} E_2(C_j,C_{j+1},X\sqrt{2})\, \bf{q}^{Q(x)} 
\]
is the modular completion of the 
\[
\sum_{X \in L; Q(X)\geq 0} I(D_X, S_T(\CC)){\bf q}^{Q(X)}.
\]
Note for fixed $n>0$, the quantity $ \sum_{\substack{X \in L \\Q(X)=n}} I(D_X, S_T(\CC))$ exactly counts twice the number of CM points of discriminant $-n$ in the standard fundamental domain $\mathcal{F}$ for $\SL_2(\Z)$ below the geodesic defined by $C_2$ (the factor $2$ arises from $x$ and $-x$ giving rise to the same CM point.) Thus we can view the theta integral as the modular completion for the generating series of ``truncated" class numbers. 

It is natural to take the limit $T \to \infty$. In fact, the first author considered exactly the resulting theta integral over the non-compact domain $\mathcal{F}$ in his PhD thesis written under the supervision of the second author. We have 

\begin{theo}\cite{Funke-diss}[Zagier's weight $3/2$ Eisenstein series.]\label{th:Zagier}
The theta series $ \theta(\tau,\varphi_{KM},L)$ is exponentially decaying as a differential form at all rational cusps of $\H$ (but not term wise), and the theta integral $\int_{\mathcal{F}} \theta(\tau,\varphi_{KM},L)$ is equal to Zagier's weight $3/2$ Eisenstein series \cite{Zagier, HZ}, i.e.,
\[
2 \int_{\mathcal{F}} \theta(\tau,\varphi_{KM},L) = \sum_{n \geq 0} H(n) q^n + \frac{v^{-1/2}}{16 \pi} \sum_{n \in \Z} \beta(4\pi n^2 v) q^{-n^2}. 
\]
Here $H(n)$ denotes the Kronecker Hurwitz class number (with $H(0)=-1/12$) and $\beta(s)= \int_1^{\infty} e^{-st}t^{-3/2} dt$.
\end{theo}

 \begin{rem}\label{null-vector-remark}
 Since $\lim_{T \to \infty} \tfrac1{T^2} C_2 = \kzxz{0}{1}{0}{0}$ we can view the limit process as replacing $C_2$ with the null vector $u:=\kzxz{0}{1}{0}{0}$.  In fact, \cite{ABMP-II} in their treatment allow the $C's$ to be rationally null as well. It is hence suggestive to replace $C_2$ with $u$ and perform the same analysis as before. However, because of convergence issues one needs to proceed carefully. In fact, the non-holomorphic tail in Zagier's series exactly arises from the failure of term wise convergence. 

Nevertheless, in \cite{FM-boundary} it is shown that the theta series $ \theta(\tau,\varphi_{KM})$ extends to a differential form on the Borel-Serre enlargement $\overline{D}$ of the symmetric space $D$ (which eg. for the upper half plane adds a real line to each rational cusp). Then a pair of non-zero non-positive vectors $C_j,C_{j+1}$, rational if null, will define a point $z_j$ in $\overline{D}$. If the collection $\CC$ satisfies the (analogous) $N$-gon conditions then one can obtain a closed loop $\gamma(\CC)$ in $\overline{D}$ as before and then a region $S(\CC)$ over which one can integrate the theta series. We will revisit this case in the near future. Note that for signature $(p,1)$ we discussed the null case in \cite{FK-I}. 
  \end{rem}
 

Now consider 
\begin{align*}
C'_1 &= \zxz{-3/2}{-5/2}{-1}{3/2} = -\frac34 e_1- \frac32 e_2 - \frac74 e_3, \qquad 
&C'_{3} = \zxz{3/2}{-5/2}{-1}{-3/2} = -\frac34 e_1 + \frac32 e_2 - \frac74 e_3,  \\
C'_{2} &=\zxz{}{4}{1}{} =  \frac32 e_1- \frac52 e_3,\qquad 
&C'_4= \zxz{}{1}{1}{} = -e_3.
\end{align*}
The associated geodesics intersect at the same four points as above (with $T=\sqrt{15}/2$), however in a different configuration:
\begin{align*}
z_{1}= \frac12 + \frac{\sqrt{15}}{2}i,
\quad
z_{2}=- \frac12 + \frac{\sqrt{15}}{2}i,
\quad 
z_{3}= \frac12 + \frac{\sqrt{3}}{2}i,
\quad 
z_{4}= -\frac12 + \frac{\sqrt{3}}{2}i.
\end{align*}
Now $\gamma_1$ is the geodesic ``diagonal" connecting the lower left vertex $-\frac12 + \frac{\sqrt{3}}{2}i$ with the upper right one $\frac12 + \frac{\sqrt{15}}{2}i$  (in this direction) and $\gamma_3$ is the other diagonal connecting $-\frac12 + \frac{\sqrt{15}}{2}i$ with $\frac12 + \frac{\sqrt{3}}{2}i$. Of course $\gamma_1$ and $\gamma_{3}$ intersect  in the interior of the square, but this is not an issue. The geodesic $\gamma_2$ is the half circle $|z|=2$ oriented counter-clockwise while $\gamma_4$ connects $ \frac12 + \frac{\sqrt{3}}{2}i$ with $-\frac12 + \frac{\sqrt{3}}{2}i$ on the unit circle. Note that while the geodesic associated to $C_4'$ is oriented {\it clockwise}, the geodesic arc segment $\gamma_4$ corresponding to $C_4'$ is oriented {\it counter-clockwise}. The reason is that $[C_3',C_4']$ and $[C_4',C_1']$ define points on the upper half plane (and not $[C'_3,-C_4']$ and $[-C_4',C_1']$).

The resulting curve $\gamma(\CC')$ defines a `butterfly', and the corresponding surface $S(\CC')$ is in fact the union of two oriented triangles,
\[
S(\CC)=S(C_1',C_2',C_3') - S(-C_3',-C_1',C_4').
\]
We has as before $w(\CC')=0$ and hence 
\[
I^0(x,\CC')= \frac14 \sum_{j=1}^4 E_2(C'_j,C'_{j+1};x).
\]
Considering the two triangles separately we obtain (now one has $w=1$ each) one obtain the same. 
Note that the formula 
\[
I(D_x,S(\mathcal{C}'))= \frac14\sum_{j=1}^4 \sgn(x,C_j)\sgn(x,C_{j+1})
\]
is still valid: It gives $1$ in the upper triangle, $-1$ in the lower (opposite orientation!), and $0$ outside both.

\section{A general result} 

The methods developed above can be applied in a rather more general situation. 

Suppose that $V= L\tt_\Z\R$ has signature $(p,q)$ with $q\ge 2$ and let $D$ be the space of oriented negative $q$-planes in $V$. 
Fix a base point $z_0\in D^+$ where $D^+$ is one component of $D$. Let $I=[0,1]$.
Suppose that 
$$\gamma: (I^{q-1},\d I^{q-1}) \lra (D,z_0)$$
is piecewise smooth map, where the notation means that $\gamma:I^{q-1}\lra D^+$ is a piecewise smooth map with $\gamma(\d I^{q-1}) = z_0$. 
Since $D^+$ is contractible, there exists a homotopy
$$\rho: I^q\lra D^+$$
such that
$$ \rho\mid I^{q-1}\times 0 = \gamma, \qquad \rho(I^{q-1}\times\{1\}) = z_0.$$
We can assume that $\rho$ is also piecewise smooth and thus, viewing $\rho$ as a $q$-chain, we can define
$$I^0(x;\gamma) = \int_{I^q} \rho^*\ph^0_{KM}(x).$$
Since $\ph^0_{KM}(x)$ is closed, this integral does not depend on the choice of $\rho$. It defines a continuous function of $x\in V$. 

Let $D_x$ be defined as in (\ref{def-Dx}). For $x\ne0$, $D_x$ is empty if $(x,x)\le 0$ and is a total geodesic subsymmetric space 
of codimension $q$ if $(x,x)>0$. When  $(x,x)>0$,  there is a fibration $\pr:D \lra D_x$ where, for $z_1\in D_1$, the fiber $\pr^{-1}(z_1) = D^\vee_{x,z_1}$
is the subsymmetric space associated to the subspace $V^\vee_{x,z_1} = z_1 + \R x$ of $V$ of signature $(1,q)$. The space $D^+-D^+_x$ is then 
a fiber bundle over $D^+_x$ with fibers $D^\vee_{x,z_1} - \{z_1\}$. 
Also
$$\pi_{q-1}(D^+-D^+_x, z_0) = \Z$$
with generator given by a (suitably oriented) sphere $S_{x,z_1}$  in $D^\vee_{x,z_1}-\{z_1\}$ for $z_1 = \pr(z_0)\in D_x^+$. 

We say that $x$ is regular with respect to $\gamma$ is $D_x\cap \gamma(I^{q-1})$ is empty. 
For $x\ne0$, this is always the case when $(x,x)\le 0$. If $(x,x)>0$ and $x$ is regular with respect to $\gamma$, then $z_0\notin D_x$ and 
$$\gamma: (I^{q-1},\d I^{q-1}) \lra (D^+-D^+_x,z_0).$$
Then $\gamma$ is homotopic to a multiple $k \cdot S_{x,z_1}$ of the generator for $\pi_{q-1}(D^+-D^+_x,z_0)$. 
We may suppose that there is a piecewise smooth $q$-chain $\hbold$ in $D^+-D^+_x$ such that 
$$\d \hbold = \gamma - k\cdot S_{x,z_1}.$$
Thus, if $B_{x,z_1}$ is the closed disk in $D^\vee_{x,z_1}$ with boundary $S_{x,z_1}$, we can take the $q$-chain
$$S = \hbold + k\cdot B_{x,z_1}$$
in the definition of $I^0(x;\gamma)$. 

Recall that there is a $q-1$-form $\Psi^0_{KM}(x)$ such that 
$$\ph^0_{KM}(x) = d\Psi^0_{KM}(x)$$
and the properties of $\Psi^0_{KM}(x)$ used in the arguments above hold in general. 

Let 
$$\P(x;\gamma) = \lim_{r\rightarrow\infty} I^0(rx;\gamma),$$
and if $x$ is regular with respect to $\gamma$, let
$$J^0(x;\gamma) = \int_{I^{q-1}}\gamma^*\Psi^0_{KM}(x).$$
Note that 
$$\lim_{r\rightarrow\infty} J^0(rx;\gamma)=0.$$
Also, we have
\begin{align*}
\P(x;\gamma) &= \lim_{r\rightarrow\infty} I^0(rx;\gamma)\\
\nass
{}&=\lim_{r\rightarrow\infty}\bigg(\ \int_{\hbold} \ph^0_{KM}(rx) + k\cdot \int_{B_{x,z_1}} \ph^0_{KM}(rx)\ \bigg)\\
\nass
{}&= k\cdot \int_{D^\vee_{x,z_1}} \ph^0_{KM}(rx)\\
\nass
{}&=k\cdot \varepsilon(x)\, 2^{-\frac{q}2-1}
\end{align*}
by Proposition~6.2 of \cite{KM.II}, where the sign $\varepsilon(x)$ depends on choices of orientations which we will not make explicit here. When $q$ is even, $\varepsilon(x)=1$. Also note that the integral in \cite{KM.II} is twice ours, since we are integrating over one of the 
two connected components. 

Arguing as before, we obtain the following. 

\begin{prop}
(i)  
If $\P(x;\gamma)\ne 0$ then $D_x\cap S$ is non-empty for any choice of $S$. \hfb
(ii) If $x$ is regular with respect to $\gamma$, then $\P(x;\gamma)$ is an integer and 
$$[\gamma] = \varepsilon(X)\,\P(x;\gamma)\cdot [S_{x,z_1}] \ \in \ \pi_{q-1}(D^+-D^+_x,z_0).$$
(iii) If $x$ is regular with respect to $\gamma$, then 
$$I^0(x;\gamma) = \P(x;\gamma) + J^0(x;\gamma).$$
\end{prop}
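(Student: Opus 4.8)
The plan is to carry the signature-$(m-2,2)$ argument of Sections~4--5 over to general $(p,q)$, using $\pi_{q-1}(D^+-D^+_x,z_0)=\Z$ in place of $\pi_1$. The three inputs I will lean on are all recorded above or in \cite{FK-II}: the primitive relation $\ph^0_{KM}(x)=d\Psi^0_{KM}(x)$ valid on $D-D_x$; the scaling relations $r\,\frac{d}{dr}\ph^0_{KM}(r^{1/2}x)=d\psi^0_{KM}(r^{1/2}x)$ and $r\,\frac{d}{dr}\Psi^0_{KM}(r^{1/2}x)=\psi^0_{KM}(r^{1/2}x)$, where $\psi^0_{KM}$ is the globally smooth Schwartz $(q-1)$-form; and the Thom-form evaluation $\int_{D^\vee_{x,z_1}}\ph^0_{KM}(x)=\varepsilon(x)\,2^{-q/2-1}$ from Proposition~6.2 of \cite{KM.II}. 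Parts (i) and (iii) are then pure Stokes, while part (ii) is the homotopy/linking computation.

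For (iii) I would first show that $r\mapsto I^0(r^{1/2}x;\gamma)-J^0(r^{1/2}x;\gamma)$ has vanishing derivative, so that $I^0(rx;\gamma)-J^0(rx;\gamma)$ is independent of $r>0$. Since $\psi^0_{KM}$ is smooth on all of $D^+$ (even across $D_x$), I may integrate $\rho^*d\psi^0_{KM}(r^{1/2}x)$ over the $q$-chain $\rho$; as $\rho$ is constant equal to $z_0$ on $\partial I^q$ away from the face $I^{q-1}\times\{0\}=\gamma$, Stokes gives $r\,\frac{d}{dr}I^0(r^{1/2}x;\gamma)=\int_{I^{q-1}}\gamma^*\psi^0_{KM}(r^{1/2}x)$, which is exactly $r\,\frac{d}{dr}J^0(r^{1/2}x;\gamma)$. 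Evaluating the constant difference at $r=1$ and letting $r\to\infty$, using $\lim_r J^0(rx;\gamma)=0$, yields $I^0(x;\gamma)=\P(x;\gamma)+J^0(x;\gamma)$ and in particular the existence of the limit. For (i) I argue by contrapositive: if some $q$-chain $S$ with $\partial S=\gamma$ has image disjoint from $D_x$, then $\Psi^0_{KM}(x)$ is smooth near $S$ and Stokes gives $I^0(x;\gamma)=\int_S d\Psi^0_{KM}(x)=\int_\gamma\Psi^0_{KM}(x)=J^0(x;\gamma)$; the same holds for $rx$ since $D_{rx}=D_x$, so $\P(x;\gamma)=\lim_r J^0(rx;\gamma)=0$. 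Thus $\P(x;\gamma)\ne0$ forces every $S$ to meet $D_x$, and in particular $(x,x)>0$.

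For (ii), assume $(x,x)>0$ (otherwise $D_x=\emptyset$, $\P=0$, $[\gamma]=0$). Using the fibration of $D^+-D^+_x$ over the contractible $D^+_x$ with fiber $D^\vee_{x,z_1}-\{z_1\}\simeq S^{q-1}$, regularity gives $\gamma\colon(I^{q-1},\partial I^{q-1})\to(D^+-D^+_x,z_0)$, so $[\gamma]=k\,[S_{x,z_1}]$ for a unique $k\in\Z$. I realize this by a piecewise-smooth $q$-chain $\hbold$ in $D^+-D^+_x$ with $\partial\hbold=\gamma-k\,S_{x,z_1}$ and take $S=\hbold+k\,B_{x,z_1}$. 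Splitting $I^0(rx;\gamma)=\int_{\hbold}\ph^0_{KM}(rx)+k\int_{B_{x,z_1}}\ph^0_{KM}(rx)$ and letting $r\to\infty$: the first term vanishes because $R(x,z)\ge R_0>0$ on $\hbold$, giving decay $O(e^{-\pi r^2 R_0})$; the second tends to $\int_{D^\vee_{x,z_1}}\ph^0_{KM}(x)=\varepsilon(x)\,2^{-q/2-1}$ by the Thom-form property. Hence $\P(x;\gamma)=k\,\varepsilon(x)\,2^{-q/2-1}$, so $k=\varepsilon(x)\,2^{q/2+1}\,\P(x;\gamma)$ is an integer and $[\gamma]=k\,[S_{x,z_1}]$; this is the integrality and homotopy-class assertion (for $q=2$ it reads $4\,\P\in\Z$, recovering Theorem~\ref{theoA}).

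The formal parts are routine; the real work is analytic and lives in (ii). I expect the main obstacle to be justifying the two limit interchanges: the exponential suppression of $\int_{\hbold}\ph^0_{KM}(rx)$ needs a uniform lower bound $R(x,z)\ge R_0>0$ over the image of $\hbold$, and the concentration $\int_{B_{x,z_1}}\ph^0_{KM}(rx)\to\int_{D^\vee_{x,z_1}}\ph^0_{KM}(x)$ needs the complementary Gaussian mass on $D^\vee_{x,z_1}-B_{x,z_1}$ to decay, again because that region stays away from the zero locus $\{z_1\}$ of $R(x,\cdot)$. A subsidiary point is smoothing the bundle homotopy so that $\hbold$ is genuinely piecewise smooth and bounded away from $D_x$, and pinning down the orientation sign $\varepsilon(x)$ so that the class comes out as $+k\,[S_{x,z_1}]$ rather than its negative.
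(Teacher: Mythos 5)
Your proposal is correct and follows essentially the same route as the paper: Stokes with the primitive $\Psi^0_{KM}$ for (i), the $r$-independence of $I^0-J^0$ via the scaling relation $r\,\frac{d}{dr}\ph^0_{KM}(r^{1/2}x)=d\psi^0_{KM}(r^{1/2}x)$ for (iii), and the decomposition $S=\hbold+k\,B_{x,z_1}$ together with the exponential decay on $\hbold$ and the Thom-form evaluation $\int_{D^\vee_{x,z_1}}\ph^0_{KM}=\varepsilon(x)2^{-q/2-1}$ for (ii). Your normalization $k=\varepsilon(x)\,2^{q/2+1}\,\P(x;\gamma)$ is the one actually forced by that computation (so the literal claim that $\P(x;\gamma)$ itself is an integer is a slip in the statement, consistent with the $4\,\P\in\Z$ of the $q=2$ case), and your attention to the uniform lower bound on $R(x,\cdot)$ away from the tube matches the paper's justification.
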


At this level of generality, we do not have an explicit expression for $J^0(x;\gamma)$ like that in Proposition~\ref{J0-formula}.
The following assumption allows us to continue the argument.\hfb
{\bf Assumption A:}
Suppose that, for $x$ regular with respect to $\gamma$, 
\beq\label{ansatz}
J^0(x;\gamma) = \mathcal E(x;\gamma) -\mathcal D(x;\gamma),
\eeq
where $\mathcal E(x;\gamma)$ extends to a continuous function of $x\in V$ and $\mathcal D(x;\gamma)$ is valued in $\frac{1}{M}\Z$ for some integer $M\ge 1$. 

Then, the difference 
$$\P(x;\gamma) -\mathcal D(x;\gamma) = I^0(x;\gamma) - \mathcal E(x;\gamma)$$ 
takes values in a discrete set for $x$ regular, but also extends to a continuous function on $V$, hence is a constant 
$$\cbold(\gamma) =\P(x;\gamma) -\mathcal D(x;\gamma) = -\mathcal D(\vbold;\gamma),$$ 
for any negative vector $\vbold$ in $V$. 
We obtain 
$$I^0(x;\gamma) = \mathcal E(x;\gamma) + \cbold(\gamma)$$
and 
$$\P(x;\gamma)  = \mathcal D(x;\gamma)-\mathcal D(\vbold;\gamma).$$

One more piece of information is needed. \hfb
{\bf Assumption B:} Suppose that $|\mathcal D(x;\gamma)|\le B$ for some constant $B$.  

Under these assumptions, we obtain convergence and completion.\hfb 
{\bf (1)} The series 
\beq\label{general-mock}
\vartheta_\mu(\tau;\gamma)=\sum_{x\in \mu+L} \P(x;\gamma) \, \qbold^{Q(x)}
\eeq 
is termwise absolutely convergent. For terms in the series with $x$ regular with respect to $\gamma$, the coefficients are generalized linking numbers. \hfb
{\bf (2)} The modular completion of $\vartheta_\mu(\tau;\gamma)$ is given by 
\begin{align}\label{general-completion}
\hat\vartheta_{\mu}(\tau;\gamma) &= \sum_{x\in \mu+L} (\mathcal E(v^{\frac12}x;\gamma) + \cbold(\gamma)\,)\, \qbold^{Q(x)}\\
\nass
{}&= \vartheta_\mu(\tau;\gamma) + \sum_{x\in \mu+L} (\mathcal E(v^{\frac12}x;\gamma) - \mathcal D(v^{\frac12}x;\gamma)\ )\,\qbold^{Q(x)}.\notag
\end{align}
The modularity of this completion follows from the identity
$$\hat\vartheta_{\mu}(\tau;\gamma)=\int_S \theta_\mu(\tau,\ph_{KM}).$$

{\bf Remark:} In all cases handled so far, an explicit formula for $J^0(x;\gamma)$ confirming Assumptions~A and B was obtained inductively. The induction depends on the 
fact that the `faces' of $\gamma$ lie in subsymmetric spaces of the form 
$$D'_y= \{ z\in D\mid y\in z\ \}$$
for negative vectors $y$ associated to the subspaces $y^\perp$ of $V$ of signature $(p,q-1)$. Examples for this were given in \cite{FK-II} where we discussed the cases when the collection $\CC$ gives to geodesic $q$-`cubes' respectively $q$-simplices in the symmetric space $D$. It would be very interesting to see if further examples can be obtained without this inductive structure.



\section{The dodecahedron}\label{section-dod}

In this section, we consider an example for which explicit formulas can again be obtained inductively, beginning with the result above on the $5$-gon.

Let $V$ be a quadratic over $\Q$ with $\sig(V) = (m-3,3)$.  For a negative vector $y$, the space $V_y= y^\perp$ has 
$\sig(V_y) = (m-3,2)$ and a collection of vectors in $V_y$ satisfying the $5$-gon conditions determine a $5$-gon in $D(V_y)$. 
The idea now is to construct a dodecahedron in $D(V)$ with such $5$-gons as its faces. 

Let 
$$\Cdod = \{\, C_0, \dots, C_{11}\,\}$$
be a collection of negative vectors in $V$. Let $V_j= C_j^\perp$ and $D'_j = D(V_j) \subset D(V)$.  The faces of the dodecahedron will be $5$-gons in 
the spaces $D'_j$ determined by the projections to $V_j$ of suitable ordered subsets of $\mathcal C$ that we now describe. 

We label the faces of a dodecahedron by elements $j$ in $\Z/12\Z$.  Choose an initial face $F_0$ (the `top') labeled $0$. Label the $5$ adjacent faces 
$F_1$, $F_2$, $F_3$, $F_4$, $F_5$, clockwise with respect to the outward normal. 
Define an involution of $\Z/12\Z$ by $a\mapsto \bar a = -(a+1)$.  Faces occur in antipodal pairs and we require that the labels of such a pair are $a$ and $\bar a$. 
The numbering of the faces is then determined. 
In particular, the `bottom' face, opposite to the `top' $F_0$, will be $F_{\bar 0}=F_{11}$. 
The cycle of faces adjacent to the top (resp. bottom) face will be numbered
$$\FF(0) = (1,2,3,4,5),$$
and 
$$\FF(\bar 0)= (\bar 5, \bar 4, \bar 3, \bar 2, \bar 1).$$
Note that the order is clockwise with respect to the outward normal in both cases. The cycles for the remaining faces are 
\begin{align*}
\FF(1)&=(0,5,\bar3, \bar4,2)&&\FF({\bar1})=(\bar0,\bar2,4,3,\bar5)\\
\FF(2)&=(0,1,\bar4,\bar5,3)&& \FF({\bar2})=(\bar0,\bar3,5,4,\bar1)\\
\FF(3)&=(0,2,\bar5,\bar1,4)&&\FF({\bar3})=(\bar0,\bar4,1,5,\bar2)\\
\FF(4)&=(0,3,\bar1,\bar2,5)&&\FF({\bar4})=(\bar0,\bar5,2,1,\bar3)\\
\FF(5)&=(0,4,\bar2,\bar3,1)&&\FF({\bar5})=(\bar0,\bar1,3,2,\bar4).
\end{align*}
Here the cycle $\FF({\bar j})$ is obtained from $\FF(j)$ by taking $\bar{\phantom a}$ and reversing the order. 
The face $F_j$ is adjacent to $F_i$ if and only if $j\in \FF(i)$, and, in that case, the cycle $\FF(j)$ is obtained from $\FF(i)= (a, j, b, u,v)$ by the recipe:
\beq\label{recipe}
\FF(i)=(a, j, b, u,v)\ \mapsto (b,i,a,\bar u, \bar v) =\FF(j).
\eeq
For example, 
$$\FF(4) = (\bar 2, 5, 0,3, \bar 1) \mapsto (0,4,\bar2,\bar3, 1) = \FF(5).$$
The recipe (\ref{recipe}) shows that,  if $F_i$ and $F_j$ are adjacent, the sets $\FF(i)$ and $\FF(j)$ share precisely $2$ elements, say $a$ and $b$,  so that $\FF(i)$ contains the 
subsequence $(a,j,b)$, and $\FF(j)$ contains the subsequence $(b,i,a)$. 

Each vertex is incident to a triple of faces, which we list clockwise with respect to the outward normal. 
The $5$ vertices of a face $F_i$ correspond to successive pairs of elements in $\FF(i)$, and, if  $(u,v)$ is such a pair, the corresponding 
vertex is labeled $[i,u,v]$.  If the faces $F_i$ and $F_j$ are adjacent, then they share vertices $[i,a,j]=[j,i,a]$ and $[j,b,i]=[i,j,b]$.

Recall that an ordered $5$-tuple of vectors $(v_1,v_2,v_3,v_4,v_5)$ in an inner product space $W$ of signature $(p,2)$ is said to satisfy {\bf the $5$-gon conditions} if 
\begin{align}
(v_j,v_j)&<0\notag\\
\nass
(v_j,v_j)(v_{j+1},v_{j+1}) - (v_j,v_{j+1})^2&>0\label{5-gon}\\
\nass
(v_j,v_j)(v_{j-1},v_{j+1}) - (v_j,v_{j-1})(v_j,v_{j+1}) &<0.\notag
\end{align}
We write  
$$P_j: V \lra V_j,  \qquad P_j v = v - \frac{(v,C_j)}{(C_j,C_j)}\,C_j$$
for the orthogonal projection to $V_j= C_j^\perp$. 

We say that a collection $\Cdod = \{ \, C_j\, \}_{j\in \Z/12\Z}$ satisfies the 
{\bf dodecahedron conditions} if, for each $i\in \Z/12\Z$, the ordered $5$-tuple of vectors  
\beq\label{proj-5-tuple}
\mathcal R(i)=(P_i C_j)_{j\in \FF(i)} = (\RR(i)_1, \RR(i)_2,\RR(i)_3,\RR(i)_4,\RR(i)_5)
\eeq
satisfies the $5$-gon conditions (\ref{5-gon}).  Here the ordering in (\ref{proj-5-tuple}) is given by the ordering in $\FF(i)$ so that, for example, for $F_{\bar2}$ the 
$5$-tuple in $V_{\bar2}$ is
$$\RR(\bar2)=(P_{\bar2} C_{\bar0}, P_{\bar2}C_{\bar3}, P_{\bar2}C_5, P_{\bar2}C_4, P_{\bar2}C_{\bar1}).$$
If $\Cdod$ is such a collection, then each $5$-tuple (\ref{proj-5-tuple}) defines a $5$-gon $\gamma(i)$ in the 
space $D(V_i)$ where we recall that $\sig(V_i) = (m-3,2)$.  

More explicitly, suppose that  $j\in \FF(i)$ and write $\FF(i)=(a,j,b,c,d)$.
Note that the orthogonal complement in $V_i$ of the vector $P_i C_j$ is precisely\footnote{If $v\in V_i$, then $(v,P_iC_j) = (v,C_j)$.} $V_i\cap V_j$. 
Let 
$$Q^i_j: V_i \lra V_i\cap V_j \qquad Q^i_j v = v- \frac{(v,P_iC_j)}{(P_iC_j,P_i C_j)}\, P_i C_j.$$
Define the path $\gamma(i)_j$  by 
$\gamma(i)_j:[0,1] \lra D(V)$, 
$$s \mapsto [C_i, P_iC_j, (s-1)Q^i_j(P_iC_a)+ s\, Q^i_j(P_i C_b)] = [C_i, C_j, (s-1)C_a+s C_b].$$
Here note that, by the $5$-gon condition, $[P_iC_j, (s-1)Q^i_j(P_iC_a)+ s\, Q^i_j(P_i C_b)]$ is an oriented negative $2$-plane in $V_i$ and hence
$\gamma(i)_j(s)$ is an oriented negative $3$-plane in $V$.   The endpoints of this path are
$$\gamma(i)_j(0) = [C_j,C_i,C_a]$$
and 
$$\gamma(i)_j(1) =  [C_i,C_j,C_b],$$
oriented negative $3$-planes in the same component of $D(V)$. 
Then the $5$-gon $\gamma(i)$ is the sum of paths  $\gamma(i)_j$ given by this recipe as $j$ runs over $\FF(i)$. 

Now suppose that $F(i)$ and $F(j)$ are adjacent faces with $\FF(i)= (a,j,b,c,d)$ and $\FF(j) = (b,i,a,\bar c, \bar d)$.
The $5$-gon $\gamma(j)$ in $D(V_j)$ includes a segment 
$$\gamma(j)_i(s) = [C_j, P_iC_i, (s-1)Q^j_i(P_jC_b)+ s\, Q^j_i(P_j C_a)] = [C_j, C_i, (s-1)C_b +s\, C_a].$$
Writing 
$$\gamma(j)_i(s) = [C_i,C_j, -s C_a + (1-s) C_b]$$
we see that 
$$\gamma(j)_i(s) = \gamma(i)_j(1-s),$$
i.e., these two `edges' coincide, with opposite parametrization. 

Thus we have proved the following result. 
\begin{prop}  Suppose that a collection $\Cdod$ satisfies the dodecahedral conditions. \hfb 
(i)  The $5$-gons in the $D(V_i)$'s associated to the collections 
$(P_i C_j)_{j\in \mathcal F(i)}$
all lie in the same component of $D(V)$. Denote this component by $D(V)^+$. \hfb
(ii) For each $i$, let $S(i)$ be an oriented $2$-cell in $D(V_i)^+$ with $\d S(i) = \gamma(i)$. 
Then the $2$-chain 
$$S(\Cdod) = \sum_{i=0}^{11} S(i)$$
in $D(V)^+$ has $\d S(\Cdod) =0$. 
\end{prop}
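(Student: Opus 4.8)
The plan is to deduce both parts directly from the edge-coincidence relation $\gamma(j)_i(s) = \gamma(i)_j(1-s)$ just established for adjacent faces $F_i$ and $F_j$, combined with the combinatorics of the face-adjacency graph of the dodecahedron; no further analytic input is needed.

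For part (i), I would first observe that each $5$-gon $\gamma(i)$ is a connected closed loop lying in $D(V_i)\subset D(V)$, hence its image is contained in a single connected component of $D(V)$ (recall $D(V)$ has exactly two components). If $F_i$ and $F_j$ are adjacent, then by the edge-coincidence relation the loop $\gamma(i)$ contains the segment $\gamma(i)_j$ and $\gamma(j)$ contains $\gamma(j)_i$, and these two segments have the same image; in particular $\gamma(i)$ and $\gamma(j)$ share a point of $D(V)$ and therefore lie in the same component. Since the face-adjacency graph of the dodecahedron is connected, any two faces are joined by a chain of adjacent faces, so all twelve loops $\gamma(0),\dots,\gamma(11)$ lie in one common component, which we name $D(V)^+$.

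For part (ii), I would compute the boundary as a $1$-chain:
$$\d S(\Cdod) = \sum_{i=0}^{11}\d S(i) = \sum_{i=0}^{11}\gamma(i) = \sum_{i=0}^{11}\sum_{j\in\FF(i)}\gamma(i)_j.$$
This is a sum of $12\cdot 5 = 60$ edge-segments. The key point is that these segments are grouped into $30$ pairs, one pair for each edge of the dodecahedron: the edge shared by adjacent faces $F_i$ and $F_j$ contributes the segment $\gamma(i)_j$ coming from $\gamma(i)$ and the segment $\gamma(j)_i$ coming from $\gamma(j)$. By the relation $\gamma(j)_i(s) = \gamma(i)_j(1-s)$ these are the same path with opposite orientation, so $\gamma(i)_j + \gamma(j)_i = 0$ as $1$-chains. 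Summing these cancellations over all $30$ edges yields $\d S(\Cdod) = 0$.

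The analytic substance has already been absorbed into the edge-coincidence relation, so what remains is purely the combinatorial bookkeeping of the dodecahedron. The one step requiring attention is verifying that the pairing of the $60$ segments into $30$ cancelling pairs is exactly the edge-adjacency pairing: one must check that for each $j\in\FF(i)$ the face $F_j$ is adjacent to $F_i$ with $i\in\FF(j)$, and that the common edge of $F_i$ and $F_j$ is precisely the one producing both $\gamma(i)_j$ and $\gamma(j)_i$. This is guaranteed by the labeling convention and the recipe (\ref{recipe}), which forces $\FF(i)=(a,j,b,c,d)$ to determine $\FF(j)=(b,i,a,\bar c,\bar d)$; the matching middle-triple structure $(a,j,b)\leftrightarrow(b,i,a)$ identifies exactly the shared edge and its two parametrizations. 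I expect this combinatorial matching, rather than any estimate, to be the only place demanding care.
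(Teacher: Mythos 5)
Your proof is correct and follows essentially the same route as the paper: the paper establishes the edge-coincidence relation $\gamma(j)_i(s)=\gamma(i)_j(1-s)$ for adjacent faces immediately before the proposition and then concludes "Thus we have proved the following result," the intended content being exactly your pairing of the $60$ edge-segments into $30$ cancelling pairs and the connectivity of the face-adjacency graph for part (i). No discrepancies to report.
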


\begin{cor} There is an oriented $3$-chain $\Ddod$ in $D(V)^+$ with $\d \mathcal D^{\text{dod}} = S(\mathcal C^{\text{dod}}).$
\end{cor}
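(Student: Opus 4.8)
The plan is to show that the $2$-chain $S(\Cdod)$ constructed in the preceding proposition is not merely closed but in fact bounds a $3$-chain in $D(V)^+$. The essential input is the Proposition that $\partial S(\Cdod)=0$, which says that $S(\Cdod)$ is a cycle in $Z_2(D(V)^+)$. Since a cycle bounds precisely when its homology class vanishes, the entire content reduces to establishing that $H_2(D(V)^+;\Z)=0$, or at least that the class $[S(\Cdod)]$ is trivial.

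First I would invoke the contractibility of $D(V)^+$. The space $D(V)$ of oriented negative $q$-planes in $V$ is a symmetric space of noncompact type (a quotient $G/K$ with $G=\OO(p,q)^0$ and $K$ maximal compact), and each connected component $D(V)^+$ is diffeomorphic to a Euclidean space $\R^{pq}$, hence contractible. This contractibility is already used freely in Section 7 of the excerpt (``Since $D^+$ is contractible, there exists a homotopy $\rho:I^q\to D^+\ldots$''), so I may assume it. Contractibility gives $H_n(D(V)^+;\Z)=0$ for all $n\ge 1$, and in particular $H_2(D(V)^+;\Z)=0$.

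The key steps, in order, are: (1) recall from the Proposition that $S(\Cdod)=\sum_{i=0}^{11}S(i)$ satisfies $\partial S(\Cdod)=0$, so it is a genuine $2$-cycle; (2) observe that $S(\Cdod)$ is piecewise smooth, so it represents a class in the singular (or smooth singular) homology $H_2(D(V)^+;\Z)$; (3) apply contractibility to conclude $H_2(D(V)^+;\Z)=0$, whence $[S(\Cdod)]=0$; (4) unwind the definition of vanishing homology class to produce an explicit piecewise smooth $3$-chain $\Ddod$ with $\partial\Ddod=S(\Cdod)$. For step (4) one can be fully concrete: fixing a base point $z_0\in D(V)^+$ and a smooth contraction $H:D(V)^+\times[0,1]\to D(V)^+$ with $H(\cdot,0)=\mathrm{id}$ and $H(\cdot,1)=z_0$, the cone $\Ddod := H_*(S(\Cdod)\times[0,1])$ is a piecewise smooth $3$-chain. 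Since each $2$-simplex of $S(\Cdod)$ cones off against the degenerate simplex at $z_0$, one computes $\partial\Ddod = S(\Cdod) - (\text{cone on }\partial S(\Cdod)) = S(\Cdod)$, using $\partial S(\Cdod)=0$.

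I do not expect any serious obstacle here: the statement is essentially a formal consequence of contractibility together with the closedness already proved, and the cone construction gives the bounding chain explicitly. The only point requiring mild care is ensuring that the homotopy $H$ can be taken piecewise smooth so that $\Ddod$ is a legitimate piecewise smooth $3$-chain over which one may later integrate the closed form $\ph^0_{KM}(x)$; this is standard since one may smooth $H$ away from the coning vertex, exactly as the piecewise smoothness of $\rho$ is assumed in Section 7. Thus the corollary follows, and $\Ddod$ is available as the domain of integration needed to carry the $5$-gon results up to the dodecahedral cell in signature $(m-3,3)$.
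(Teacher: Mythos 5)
Your argument is correct and is exactly the one the paper intends (it leaves the corollary unproved precisely because it is the formal consequence of $\partial S(\Cdod)=0$ together with the contractibility of $D(V)^+$, with the cone over a contraction furnishing the piecewise smooth $3$-chain). No discrepancy with the paper's approach.
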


We may then define a function of $x\in V$ by 
$$I^0(x;\mathcal C^{\text{dod}}) = \int_{\mathcal D^{\text{dod}}}\ph^0_{KM}(x).$$
Once the $2$-cells $S(i)$ have been chosen, $I^0(x;\mathcal C^{\text{dod}})$ does not depend on the choice of the $3$-chain $\mathcal D^{\text{dod}}$.
In fact, we will see that there is also no dependence on the $2$-cells $S(i)$ in $D(V_i)$. 

The regular set is 
$$\text{Reg}(\mathcal C^{\text{dod}})=\{\ x\in V\mid (x,C_j)\ne0, \ \forall j\, \}.$$
If $x$ is regular, the $5$-gons $\gamma(i)$ and the $2$-cells $S(i)$ lie in $D(V)- D_x$, since they lie in the subsymmetric spaces 
$D(V_i) \hookrightarrow D(V)$, where the embedding 
is given by $\zeta \mapsto [C_i,\zeta]$.

Recall that, on the set $D(V)-D_x$, 
 $$\ph^0_{KM}(x) = d\Psi_{KM}^0(x),$$
for an explicit $2$-form $\Psi_{KM}^0(x)$ on $D$.
For $x$ is regular with respect to $\Cdod$, let 
$$J^0(x;\Cdod):= \int_{S(\Cdod)} \Psi^0_{KM}(x).$$

As before, an inductive calculation yields the following. 

\begin{prop}  For $x$ regular with respect to $\Cdod$, 
\begin{align*}
J^0(x;\Cdod) & = \frac18\sum_{\nubold} E_3(\nubold,  x\sqrt{2}) - \sgn(x;\nubold)\\
\nass
{}&\qquad+ \frac18\sum_{i\in \Z/12\Z} \wnat(\RR(i)) \bigg(\ E_1(C_i;x\sqrt{2}) - \sgn((x,C_i))\ \bigg)
\end{align*}
where $\nubold$ runs over the vertices of the dodecahedron and where, with $\RR(i)$ given by (\ref{proj-5-tuple}), 
$$\wnat(\RR(i)) = - \sum_{\ell =1}^5 \sgn((\vbold_i,\RR(i)_\ell))\,\sgn((\vbold_i,\RR(i)_{\ell+1}))$$
for any negative vector $\vbold \in V_i$. 
\end{prop}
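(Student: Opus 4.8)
The plan is to imitate, one dimension higher, the computation of Proposition~\ref{J0-formula}, reducing the dodecahedron integral face-by-face to the already-proven $5$-gon formula. First I would write $J^0(x;\Cdod)=\sum_{i\in\Z/12\Z}\int_{S(i)}\Psi^0_{KM}(x)$. Since each $2$-cell $S(i)$ lies in the embedded subsymmetric space $\kappa_i\colon D(V_i)\hookrightarrow D(V)$, $\zeta\mapsto[C_i,\zeta]$, with $\sig(V_i)=(m-3,2)$, I would pull back $\Psi^0_{KM}(x)$ along $\kappa_i$ using the face-restriction identity of Corollary~6.3, which holds verbatim in this signature:
\[
\kappa_i^*\Psi^0_{KM}(x)=-\sqrt2\,(x,\und{C_i})\int_1^\infty e^{-2\pi t^2(x,\und{C_i})^2}\,\ph^{V_i,0}_{KM}(t\,P_i x)\,dt.
\]
Integrating over $S(i)$ and exchanging the order of integration identifies the inner integral with the $5$-gon local theta integral in $D(V_i)$, namely $\int_{S(i)}\ph^{V_i,0}_{KM}(t\,P_ix)=I^0(t\,P_ix;\RR(i))$, where $\gamma(i)=\partial S(i)$ is the $5$-gon attached to $\RR(i)$.

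Next I would insert the explicit evaluation of this $5$-gon integral. Applying part~(i) of Theorem~\ref{theoA} inside $V_i$ to the collection $\RR(i)$ gives $4\,I^0(y;\RR(i))=\wnat(\RR(i))+\sum_{\ell=1}^5 E_2(\RR(i)_\ell,\RR(i)_{\ell+1};y\sqrt2)$; substituting $y=t\,P_ix$ and splitting the $t$-integral produces a constant part and an $E_2$-part. For the constant part, the elementary Gaussian identity
\[
-\sqrt2\,(x,\und{C_i})\int_1^\infty e^{-2\pi t^2(x,\und{C_i})^2}\,dt=\tfrac12\bigl(E_1(C_i;x\sqrt2)-\sgn(x,C_i)\bigr),
\]
together with the leading $\tfrac14$, yields exactly $\tfrac18\,\wnat(\RR(i))\bigl(E_1(C_i;x\sqrt2)-\sgn(x,C_i)\bigr)$, and summing over $i$ reproduces the second sum of the proposition verbatim.

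For the $E_2$-part I would apply the recursion of Proposition~7.3 of \cite{FK-II} one level higher, in the form that raises $E_2$ to $E_3$: for consecutive $j,j'\in\FF(i)$ the operator $-\sqrt2\,(x,\und{C_i})\int_1^\infty e^{-2\pi t^2(x,\und{C_i})^2}E_2(P_iC_j,P_iC_{j'};t\,P_ix\sqrt2)\,dt$ equals $\tfrac12$ times the apex-$C_i$ representation of $E_3(\nubold;x\sqrt2)-\sgn(x;\nubold)$ at the vertex $\nubold=[i,j,j']$ cut out by the three faces $F_i,F_j,F_{j'}$. Combined with the leading $\tfrac14$, each face $F_i$ therefore contributes $\tfrac18$ times its apex representation to each of its five incident vertices. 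The decisive combinatorial step is then to reassemble the double sum over (face, edge) pairs as a sum over the $20$ vertices: each vertex is incident to exactly three faces, hence collects exactly three apex representations, one along each of the three distinguished directions $C_a,C_b,C_c$. Just as in the $N$-gon the two edges meeting at a vertex supply the two apex representations of the symmetric function $E_2$, here the three faces meeting at $\nubold$ must supply the three apex representations of the symmetric $E_3$, and these sum to the single symmetric value $E_3(\nubold;x\sqrt2)$ (and likewise the sign products sum to $\sgn(x;\nubold)$). This reconstructs $\tfrac18\bigl(E_3(\nubold;x\sqrt2)-\sgn(x;\nubold)\bigr)$ per vertex, and summing over all vertices gives the first sum.

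The main obstacle is precisely this gluing of apex pieces with correct signs. I must verify that the induced orientations of the cells $S(i)$ --- dictated by the clockwise outward-normal labelling and the recipe (\ref{recipe}) --- make the three apex representations add rather than cancel, and that the recursion's output for the three incident faces is genuinely the apex decomposition of one and the same symmetric $E_3$. The already-checked edge-gluing $\gamma(j)_i(s)=\gamma(i)_j(1-s)$ for adjacent faces is the one-dimension-lower shadow of this vertex compatibility; promoting it to the $3$-fold incidences of the dodecahedron, and matching it against the permutation symmetry of $E_3$, is the heart of the argument. The identity that the three apex representations sum to the full symmetric $E_3$ is the higher-$k$ analogue of the $E_1$-to-$E_2$ assembly used in Proposition~\ref{J0-formula}, and I would establish it inductively from the recursion formula of \cite{FK-II}.
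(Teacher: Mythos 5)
Your proposal follows essentially the same route as the paper: decompose $J^0(x;\Cdod)$ over the twelve faces, pull back $\Psi^0_{KM}(x)$ via Corollary~6.3, insert the $5$-gon evaluation $4\,I^0(\cdot;\RR(i))=\wnat(\RR(i))+\sum_\ell E_2(\cdots)$, handle the constant part with the Gaussian integral to get the $E_1$ terms, and regroup the $60$ edge terms in threes at the $20$ vertices so that the recursion formula of Proposition~7.3 of \cite{FK-II} assembles each triple into $\tfrac18\bigl(E_3(\nubold;x\sqrt2)-\sgn(x;\nubold)\bigr)$. The ``three apex representations sum to the symmetric $E_3$'' identity you flag as the heart of the matter is exactly what the cited recursion formula supplies, so your outline matches the paper's proof step for step.
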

\begin{proof}
Using the formulas from \cite{FK-II}, we have  
\begin{align*}
\int_{S(\Cdod)} \Psi^0_{KM}(x) & = \sum_i \int_{S(i)} \Psi^0_{KM}(x)\\
\nass
{}&= -\sum_i\sqrt{2}\,(x,\und{C_i})\int_1^\infty e^{-2\pi t^2(x,\und{C_i})^2}\int_{S(i)} \ph_{KM}^{V_j,0}(t x_{\perp C_i}) \,dt\\
\nass
{}&= -\sum_i \sqrt{2}\,(x,\und{C_i})\int_1^\infty e^{-2\pi t^2(x,\und{C_i})^2} I^0(t x_{\perp C_ i}; \RR(j)) \,dt\\
\nass
{}&= -\frac14\sum_i (\sqrt{2}x,\und{C}_i) \int_1^\infty e^{-2\pi t^2(x,\und{C_i})^2} \\
\nass
&\qquad\qquad \times \bigg[\ \wnat(\RR(i)) + \sum_{\ell =1}^5 E_2(\RR(i)_\ell, \RR(i)_{\ell+1},tx_{\perp C_i}\sqrt{2})  \bigg]\,dt.
\end{align*}
Here in the first step we use the relation of Corollary~6.3,
$$\kappa_j^*\Psi_{KM}^0(x) = - \sqrt{2}\,(x, \und{C}_j)\,\int_1^\infty e^{-2\pi t^2 (x, \und{C}_j)^2} \ph_{KM}^{V_j,0}(t x_{\perp C_j})\,dt,$$

To apply the induction identity, we regroup the $60$ terms in the last expression in $3$'s according to the vertices.
For example, the vertex $[0,1,2]$ is shared by the faces $F_0$, $F_1$ and $F_2$.  The corresponding $5$-gon data are
\begin{align*}
\RR(0)&= (\ P_0C_1, P_0 C_2, P_0C_3, P_0C_4, P_0C_5\ )\\
\nass
\RR(1) &= (\ P_1 C_0,P_1C_5, P_1 C_{\bar3}, P_1C_{\bar4}, P_1C_2\ )\\
\nass
\RR(2)&=(\ P_2 C_0,P_2 C_1, P_2 C_{\bar 4}, P_2 P_{\bar 5}, P_2 C_3\ ).
\end{align*}
We collect a term from each of the groups $i=0$, $1$ and $2$ amounting to the contribution
\begin{align*}
&-\frac14\,(\sqrt{2}\,x, \und{C}_0)\, \int_1^\infty e^{-2\pi t^2(x,\und{C}_0))^2}\,E_2(P_0C_1, P_0C_2,tx_{\perp C_0}\sqrt{2})\,dt\\
\nass
&-\frac14\,(\sqrt{2}\,x, \und{C}_1)\, \int_1^\infty e^{-2\pi t^2(x,\und{C}_1))^2}\,E_2( P_1C_0, P_1C_2,tx_{\perp C_1}\sqrt{2})\,dt\\
\nass
&-\frac14\,(\sqrt{2}\,x, \und{C}_2)\, \int_1^\infty e^{-2\pi t^2(x,\und{C}_2))^2}\,E_2(P_2C_0, P_2C_1,tx_{\perp C_0}\sqrt{2})\,dt,
\end{align*}
where, in the second line, we have used the symmetry $E_2(c_1,c_2;x) = E_2(c_2,c_1;x)$ of the generalized error function.  
By the recursion formula of Proposition~7.3 of \cite{FK-II}, this contribution is 
$$\frac18\,\bigg(\ E_3(C_0,C_1,C_2; x\sqrt{2}) - \sgn((x,C_0))\,\sgn((x,C_1))\,\sgn((x,C_2))\ \bigg).$$

Thus the total contribution of such terms is 
$$\frac18\sum_{\nubold} E_3(\nubold,  x\sqrt{2}) - \sgn(x;\nubold),$$
where the sum runs over the triples of vectors associated to the vertices of $\Ddod$. 

The remaining contribution is 
\begin{align*}
-\frac14&\sum_i \wnat(\RR(i))\,\sqrt{2}\,(x,\und{C}_i) \int_1^\infty e^{-2\pi t^2(x,\und{C_i})^2} \,dt\\
\nass
{}&=\frac18\sum_i \wnat(\RR(i)) \bigg(\ E_1(C_i;x\sqrt{2}) - \sgn((x,C_i))\ \bigg).
 \end{align*}
where $\wnat(\RR(i))$ is as in the proposition. 
\end{proof}

Now the topological argument of the previous section can be applied.  
%
%
The resulting modularity statement for the dodecahedron is summarized 
as follows. 

\begin{theo}  Suppose that $\sig(V)= (m-3,3)$ and that $\Cdod = \{ \,C_0, \dots, C_{11}\,\}$ is a collection of negative vectors in $V$ 
satisfying the dodecahedron conditions. 
Let
$$\P(x;\Cdod) = \mathcal D(x;\Cdod)-\mathcal D(\vbold;\Cdod),$$
where $\vbold$ is any negative vector in $V$ and
$$\mathcal D(x;\Cdod)  =- \frac18\sum_{\nubold}  \sgn(x;\nubold)\\
- \frac18\sum_{i\in \Z/12\Z} \wnat(\RR(i)) \,\sgn((x,C_i)).$$
Here $\nubold$ runs over the triples of vectors associated to the vertices of $\Ddod$, and for the
`face cycle'  $\FF(i) = \{\, j_1,j_2,j_3,j_4,j_5\,\}$, and collection 
$$\mathcal R(i) = \{ \, P_i C_{j_1}, P_i C_{j_2},P_i C_{j_3},P_i C_{j_4},P_i C_{j_5}\,\}$$
of negative vectors in $V_i = C_i^\perp$ associated to the $i$th face,  
$$\wnat(\RR(i)) = -\sum_{\ell=1}^5 \sgn((\vbold_i, C_{j_\ell}))\,\sgn((\vbold_i, C_{j_{\ell+1}})),$$
where $\vbold_i$ is any negative vector in $V_i$. 
Let 
$$\mathcal E(x;\Cdod) = \frac18\sum_{\nubold} E_3(\nubold,  x\sqrt{2})
+ \frac18\sum_{i\in \Z/12\Z} \wnat(\RR(i)) \,E_1(C_i;x\sqrt{2}).$$
%

(i) The integral $I^0(x;\Cdod)$ is given by 
$$I^0(x;\Cdod) = \mathcal E(x;\Cdod) - \mathcal D(\vbold;\Cdod)$$
for any negative vector $\vbold$ in $V$. In particular, the integral depends only on the collection $\Cdod$ and not on the choice of the faces surfaces $S(i)$  in $D'_{C_i}$. 
\hfb
(ii) 
For a lattice $L\subset L^\vee\subset V$, the series 
\beq\label{mock-dod}
\vartheta_\mu(\tau;\Cdod)=\sum_{x\in \mu+L} \P(x;\Cdod) \, \qbold^{Q(x)}
\eeq 
is termwise absolutely convergent and defines a mock modular form of weight $\frac12 m$ with completion
\begin{align}
\hat\vartheta_{\mu}(\tau;\Cdod) &= \sum_{x\in \mu+L} (\mathcal E(v^{\frac12}x;\Cdod) - \mathcal D(\vbold;\Cdod)\,)\, \qbold^{Q(x)}\\
\nass
{}&= \vartheta_\mu(\tau;\Cdod) + \sum_{x\in \mu+L} (\mathcal E(v^{\frac12}x;\Cdod) - \mathcal D(x;\Cdod)\ )\,\qbold^{Q(x)}.\notag
\end{align}
The modularity of this completion follows from the identity
$$\hat\vartheta_{\mu}(\tau;\Cdod)=\int_{\Ddod} \theta_\mu(\tau,\ph_{KM}).$$

\end{theo}

\begin{rem} (i) It is probably difficult to prove the absolute convergence of (\ref{mock-dod}) directly from the set of inequalities imposed by the 
dodecahedral conditions, a collection $5$-gon conditions on each of the 12 sets of projections $\mathcal R(i)$. \hfb
(ii) Of course, we do not know whether or not such dodecahedrons could arise in the context of \cite{ABMP-II}.  For us the main point is to provide an example in which the method of section~7 can be made to yield an explicit formula. The cube and tetrahedron were handled in \cite{FK-II}. The octahedron should be similar but simpler while the icosahedron will require bit more work. 
\end{rem}

A supply of collections satisfying the dodecahedron conditions is provided by a  `seed' construction. 
Fix an oriented negative $3$-plane $z_0$ with properly oriented orthonormal basis $u_0$, $u_1$, $u_2$ so that $z_0\simeq \R^3$ with the negative of the 
standard Euclidean inner product. For a regular dodecahedron $\mathbb D$ in $\R^3$, centered at the origin, let $\Cdod_0=\{\, c_0, \dots, c_{11}\,\}$ be the images in $z_0$
of the outward unit normal vectors to the faces, numbered as described above. 
In particular, $c_{\bar r} = - c_r$.   It is then easily checked, using $\SO(3)$ equivariance  and the transitivity of the symmetry group of the 
regular dodecahedron on faces, edges and vertices,  that 
the collection $\Cdod_0$ satisfies the dodecahedron conditions. Of course, the associated `dodecahedron' in $D=D(V)$ degenerates to the point $z_0$.
For a positive vector $v_0$ in $z_0^\perp$ and $t= (t_0,\dots,t_{11})\in \R^{12}$, let
$$\Cdod_t = \Cdod_0 + t v_0.$$
Since the dodecahedron conditions define an open subset of $V^{12}$, these conditions are still satisfied by $\Cdod_t$ for $t$ small. 
It remains to show that we can take $t$ so that the $20$ vertices determined by $\Cdod_t$ are distinct. 
But the coincidence of any two of these $3$-planes amounts to non-trivial linear conditions on the components of $t$, so the vertices will be distinct 
for an open set of $t$'s.

We leave aside the following problem.  Let $V^{12}_{\text{\rm dod}}$ be the set of collection $\Cdod$ satisfying the dodecahedral conditions, and let 
$(V^{12}_{\text{\rm dod}})^0$ be the subset of collection for which the associated $20$ vertices are distinct.  The problem is to write down additional conditions 
defining $(V^{12}_{\text{\rm dod}})^0$ and to determine its connected components. Is there a nice characterization of the geometry of the dodecahedra associated to the different\footnote{At the moment we do not know whether or not $(V^{12}_{\text{\rm dod}})^0$ is connected.}
components?

\end{document}